\documentclass[11pt]{article}
\usepackage{cite}
\usepackage{enumitem}
 \usepackage{amsfonts}
\usepackage{amsmath}
\usepackage{amsthm}
\usepackage{amssymb}
\DeclareMathOperator{\cl}{cl}
\usepackage{amscd}
\newtheorem{definition}{Definition}[section]
\newtheorem{theorem}{Theorem}[section]
\newtheorem{corollary}{Corollary}[section]
\newtheorem{lemma}{Lemma}[section]

\newtheorem{remark}{Remark}[section]
\newtheorem{example}{Example}[section]

\begin{document}
\title{{Global and local error bounds: characterizations via directional derivatives and tangent cones}}
\author{{Yu Han \footnote{Corresponding author,  E-mail: hanyumath@163.com} }\\
{\small\it School of Statistics and Data Science, Jiangxi University of Finance and Economics, }\\
{\small\it  Nanchang, Jiangxi 330013,  China} }
\date{}
\maketitle
\vspace*{-9mm}
\begin{center}
\begin{minipage}{6.2in}
{\bf Abstract.} We develop new characterizations of both global and local error bounds for general functions, using directional derivatives and tangent cones without imposing convexity or linear structure.  We first establish several equivalent conditions for the global error bound of a nonnegative lower semicontinuous function.   These equivalences hold for general, possibly nonconvex and nonsmooth functions.  We further link the error bound with perturbation stability, Hausdorff stability of sublevel sets, and an inverse-sublevel-set estimate.  Turning to directional derivatives, we introduce the minimal unit-sphere directional derivative \(\varphi(x)\) on the tangent cone and clarify its exact relation with the global slope.  For Lipschitz continuous functions we prove that \(\sup_{x\notin S_0} \varphi(x) < 0\) is sufficient for an error bound, and for convex functions on convex sets this condition is also necessary,    In finite dimensions we obtain sharp local results: if \(\varphi(\bar{x}) > 0\) at a solution \(\bar{x}\), then a local error bound holds and the optimal constant is exactly \(1/\varphi(\bar{x})\); if \(\varphi(\bar{x}) = 0\) and a suitable direction exists outside the tangent cone of the solution set, the local error bound fails.   A general estimate relating the directional derivative to the distance from the tangent cone of the solution set is also derived.
  \\ \ \\
{\bf Keywords:}  Global   error bound;  Local error bound;   Directional derivative;  Tangent cone;   Lipschitz continuity.
\\ \ \\
{\bf AMS Subject Classifications:}  49J52; 49J53; 90C26; 90C30. 

\end{minipage}
\end{center}
\section{Introduction}
Error bounds, which quantify the distance from a candidate point to the solution set of a given problem in terms of a residual function, play a fundamental role in variational analysis and optimization. Since the seminal work of Hoffman~\cite{Hoffman} on linear inequality systems, the study of error bounds has attracted lasting interest due to their importance in the convergence analysis of algorithms, sensitivity and stability of solutions, and constraint qualifications; see, e.g.,~\cite{Pang1997, FacchineiPang,  AC2004,KL1999,Kruger,KLT2018,NgZheng,Penot2019,Hu2007,Zheng2012,VanNgai2010,Ngai2015, Wu2002, Li2009} and the references therein. Error bounds have been established for various structured problems, including convex polynomials over polyhedral sets~\cite{Ngai2015}, semi-infinite convex systems~\cite{VanNgai2010}, quasi-subsmooth inequalities~\cite{Zheng2012}, generalized power cones~\cite{Lin2024}, and sign-constrained Stiefel manifolds~\cite{Chen2025}. Moreover, error bounds are fundamental in the analysis of merit (or gap) functions for variational inequalities, complementarity problems, and vector optimization problems, as they guarantee that such functions faithfully reflect the distance to the solution set~\cite{YaoZheng2018, HMTZ2020, Hearn,Fukushima,FacchineiPang, Li2009, DKG2017, LM2010,TanabeNew}.

Let \( X \) be a   normed vector space.   Given an extended-real-valued function \(f\) and its sublevel set \(S_f = \{x : f(x) \le 0\}\), the global error bound property
\begin{equation}\label{EWJ1}
	d (x, S_f) \le \tau [f(x)]_+   \quad \forall x \in  X.
\end{equation}
provides a linear estimate of the distance to the feasible or optimal set,  where  $\tau > 0$, 
$	d (x, S_f ) = \inf \{ \|x - y\| \mid y \in S_f  \}	$	and $	[f(x)]_+ = \max \{ f(x), 0 \}.$
More generally, for a nonnegative function \(\theta\) on a set \(A\) with zero set \(S_0 = \{x \in A : \theta(x) = 0\}\), one seeks \(\tau >0\) such that \(\operatorname{dist}(x,S_0) \le \tau \theta(x)\) for all \(x \in A\). This formulation naturally encompasses classical error bounds for constraint systems \(f(x)\le 0\) by taking \(\theta(x) = [f(x)]_+\) and also accommodates merit functions used in vector optimization~\cite{LiuNgYang2009}.

A powerful approach to characterizing error bounds is through descent conditions and slopes.  
Az\'{e} and Corvellec~\cite{AC2004} characterized global and local error bounds for lower semicontinuous functions on complete metric spaces using the strong slope.   Kruger~\cite{Kruger} developed a comprehensive theory of error bounds and metric subregularity using various primal and subdifferential slopes. Stability of error bounds under perturbations has been studied in~\cite{KLT2018,KNT2010}. These slope-based characterizations are extremely general and have laid a solid theoretical foundation for the understanding of error bounds.

While slope-based criteria are broadly applicable, they do not immediately provide explicit geometric conditions. Directional derivatives, in contrast, offer a more direct and geometric way to measure descent, and they are intimately linked to slopes.  For convex functions on reflexive Banach spaces, Ng and Zheng~\cite{NgZheng} established that a global error bound is equivalent to the existence, for every point outside the solution set, of a direction in which the directional derivative is strictly negative and bounded away from zero.   For linear inequality systems, Ng and Yang~\cite{NgYang2002} derived error bound criteria via angles between subspaces and polyhedral cone properties, and obtained exact characterizations in Hilbert spaces.     Hu~\cite{Hu2007} provided characterizations of local and global error bounds for convex inequalities in Banach spaces using directional derivatives and subdifferentials. 
More recently, Wei, Théra, and Yao~\cite{WeiTheraYao2024} gave primal characterizations of
the stability of error bounds, both local and global, for semi-infinite convex constraint systems in Banach spaces  using directional derivatives, while their subsequent work~\cite{WeiTheraYao2025} investigated the stability of error bounds under perturbations for convex functions on Banach spaces, providing explicit formulae for error bound moduli in terms of directional derivatives. However, these characterizations rely heavily on convexity, and extending them to nonconvex, nonsmooth functions without losing the directional derivative formulation has not been fully explored.

In the present paper we develop new characterizations of both global and local error bounds for general  functions using directional derivatives and tangent cones, without imposing convexity or linear structure. 
Our contributions are threefold.

(1) Global error bound equivalences.  In Section~3 we prove that the global error bound for \(\theta\) on \(A\) is equivalent to a uniform descent condition, to the existence of a descent direction from every non-solution point, to a strengthened descent estimate involving the distance to \(S_0\), and to a uniform positivity of the global slope 
\[
m(x) = \sup_{\substack{y\in A\\ \theta(y)<\theta(x)}} \frac{\theta(x)-\theta(y)}{\|x-y\|}.
\]
These equivalences (Theorem~\ref{TTWCJK}) are established using only the lower semicontinuity of \(\theta\) and the completeness of \(A\) via Ekeland's variational principle, making them valid for nonconvex functions as well.  We further connect the error bound with an extended-domain infimum, perturbation stability, a global estimate involving the distance to \(A\), Hausdorff stability of sublevel sets, and an inverse-sublevel-set estimate (Theorem~\ref{XLWCJK}). These results  enrich the perturbation and stability viewpoints presented in~\cite{KLT2018,Kruger,KNT2010,VanNgai2010} and extend classical criteria to a fully nonconvex setting.

(2) Directional derivative characterizations.  In Section~4 we introduce the minimal directional derivative on the unit sphere of the tangent cone:
\[
\varphi(x) := \inf\bigl\{ \theta'(x;v) : v \in T_A(x),\; \|v\| = 1 \bigr\}.
\]
We clarify the exact relationship between \(\varphi(x)\) and the global slope \(m(x)\). For Lipschitz continuous \(\theta\) we show that \(m(x) \ge -\varphi(x)\), while for convex \(\theta\) on a convex set the reverse inequality holds (Theorem~\ref{ThyvQ}). Building on this, we prove that if \(\theta\) is Lipschitz and \(\sup_{x\in A\setminus S_0} \varphi(x) < 0\), then an error bound holds (Theorem~\ref{TVhCJK}); when \(\theta\) and \(A\) are convex, this condition is also necessary, thereby subsuming the directional derivative criterion of Ng and Zheng~\cite{NgZheng}.  

(3) Local error bounds and optimal constants.  In the finite-dimensional setting we obtain sharp local results. If \(\varphi(\bar x) > 0\) at a point \(\bar x \in S_0\), then \(\theta\) has a local error bound at \(\bar x\) and the optimal local error bound constant is exactly \(1/\varphi(\bar x)\) (Theorem~\ref{TYXWs}). Conversely, if \(\varphi(\bar x) = 0\) and there exists a direction in the tangent cone of \(A\) but not in the tangent cone of \(S_0\) with zero directional derivative, the local error bound fails. We also derive a necessary condition involving the tangent cones (Corollary  \ref{CtgXEQ}) and a general lower estimate of the directional derivative in terms of the distance to the tangent cone of \(S_0\) (Theorem \ref{TnbEQ}). These findings  provide a geometric interpretation of the exact error bound constant.

The remainder of the paper is organized as follows. Section~2 recalls basic concepts and notation, including the tangent cone and Ekeland's variational principle. Section~3 is devoted to the equivalent characterizations of the global error bound (Theorems~3.1 and~3.2) and the relationship between local and global error bounds under compactness.
Section~4 is devoted to the directional derivative characterizations, the relationship with the slope, and the analysis of local error bounds and optimal constants.

\section{Preliminaries}
Throughout this paper, unless otherwise specified,    let \( X \) be a   normed
vector space    and   \( A \subseteq X \) be a nonempty set.    Assume that   \( \theta : X \to \mathbb{R} \cup \{+\infty\} \) is  a proper function such that \( S_0 := \{ x \in A : \theta(x) = 0 \} \neq \emptyset \) and for all \( x \in A \), \( \theta(x) \geq 0 \). We say that \( \theta \) has an error bound on \( A \) if there exists \( \tau > 0 \) such that
\begin{equation}\label{EWJ2}
	d (x, S_0) \leq \tau \theta(x), \quad \forall x \in A.
\end{equation}

\begin{remark}
	The classical error bound \eqref{EWJ1} is a special case of the general formulation \eqref{EWJ2} studied in this paper. 
	Indeed, take \(A = X\) and for any \(x \in X\) define \(\theta(x) = [f(x)]_+ = \max\{f(x), 0\}\). 
	Then clearly \(S_f = \{x \in X \mid f(x) \leq 0\} = \{x \in A : \theta(x) = 0\} = S_0\), and \eqref{EWJ2} reduces to \eqref{EWJ1}.
\end{remark}

$B(x_0,r)$ denotes the open ball with center $x_0 \in X$ and radius $r > 0$. 
For   nonempty sets \(Q, D \subseteq X\),  
the Hausdorff distance between $Q$ and $D$ is defined by
$$d_H \left( {Q,D} \right): = \max \left\{ {g\left( {Q,D} \right),g\left( {D,Q} \right)} \right\},$$
where $g\left( {Q,D} \right): = \mathop {\sup }\limits_{x \in Q} d\left( {x,D} \right)$ and $d(x,D) := \inf_{y \in D} \|x - y\|$.

\begin{definition}
	$\theta$ is said to have a  local error bound at $x_0 \in S_0$, if there exist $\tau > 0$  and $\delta > 0$ such that
	\[
	d(x, S_0) \leq \tau \theta (x), \qquad \forall x \in A \cap B(x_0,\delta).
	\]
	
\end{definition}
In keeping with the distinction from local error bounds, the term error bound is occasionally called a global error bound.

\begin{definition}  
	For a nonempty set $D \subseteq X$ and $x \in D$, the  tangent cone  of  $D$ at  $x$ is defined  by  
	$$T_D(x)  := \bigl\{ d \in X : \exists\, t_k \downarrow 0,\; \exists\, d_k \to d \text{ with } x + t_k d_k \in D \bigr\}.$$
\end{definition}
\begin{remark}  It is straightforward to verify that $T_D(x)$ is closed, without assuming that $D$ is closed.   \end{remark}

\begin{remark} \label{RtDFxl}    \cite{AE1984}    Let  $D \subseteq X$ be a closed convex set  and $x \in D$.  Then  $$  T_D(x) = \cl \left( \bigcup_{t > 0} t(D - x) \right). $$
\end{remark}

\begin{lemma}[Ekeland's variational principle]\label{ELdaz}
	Let $(M, d)$ be a complete metric space and $f: M \to \mathbb{R} \cup \{+\infty\}$ a proper lower semicontinuous function bounded below. If $x_0 \in \operatorname{dom} f$ and $\varepsilon > 0,\ \lambda > 0$ satisfy
	\[
	f(x_0) \le \inf_{x \in M} f(x) + \varepsilon,
	\]
	then there exists $\bar{x} \in M$ such that:
	\begin{enumerate}
		\item $f(\bar{x}) \le f(x_0)$;
		\item $d(x_0, \bar{x}) \le \lambda$;
		\item For all $x \in M \setminus \{\bar{x}\}$,
		\[
		f(x) > f(\bar{x}) - \frac{\varepsilon}{\lambda} \, d(x, \bar{x}) .
		\]
	\end{enumerate}
\end{lemma}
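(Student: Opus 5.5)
The plan is the classical iterative proof of Ekeland's variational principle: build a sequence of nested closed sets that shrink to a single point. Replacing $\lambda$ by a rescaled metric, it is enough to work with the weighted inequality $f(x) + \frac{\varepsilon}{\lambda} d(x,y) \le f(y)$. Set $\alpha := \varepsilon/\lambda$, and for $y \in \operatorname{dom} f$ define
\[
S(y) := \{ x \in M : f(x) + \alpha\, d(x,y) \le f(y) \}.
\]
Each $S(y)$ is closed by lower semicontinuity of $f$ and continuity of $d$, and contains $y$. The triangle inequality gives transitivity: if $x \in S(y)$, then $S(x) \subseteq S(y)$.

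Starting from $x_0$, define the sequence inductively. Given $x_n$, choose $x_{n+1} \in S(x_n)$ with
\[
f(x_{n+1}) \le \inf_{S(x_n)} f + 2^{-n-1}.
\]
This is possible because $f$ is bounded below. Then $S(x_{n+1}) \subseteq S(x_n)$. For any $x \in S(x_{n+1})$ we have
\[
\alpha\, d(x,x_{n+1}) \le f(x_{n+1}) - f(x) \le f(x_{n+1}) - \inf_{S(x_n)} f \le 2^{-n-1},
\]
so $\operatorname{diam} S(x_{n+1}) \to 0$. The sets are nested, closed and nonempty with diameters tending to zero. By completeness (Cantor's intersection theorem) they intersect in a single point $\bar x$.

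Next verify the three properties.
\begin{enumerate}
\item Since $\bar x \in S(x_0)$, we get $f(\bar x) + \alpha\, d(\bar x, x_0) \le f(x_0)$, so $f(\bar x) \le f(x_0)$.
\item From the same inequality, $\alpha\, d(\bar x, x_0) \le f(x_0) - f(\bar x) \le f(x_0) - \inf_M f \le \varepsilon$. Hence $d(\bar x, x_0) \le \varepsilon/\alpha = \lambda$.
\item Suppose $x \ne \bar x$ and $f(x) \le f(\bar x) - \alpha\, d(x,\bar x)$. Then $x \in S(\bar x) \subseteq S(x_n)$ for every $n$ by transitivity, so $x \in \bigcap_n S(x_n) = \{\bar x\}$, a contradiction. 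This gives the strict inequality.
\end{enumerate}

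The main care point is the bookkeeping in the nested-set construction. One must check that $x_{n+1} \in S(x_n)$ yields the inclusion $S(x_{n+1}) \subseteq S(x_n)$, which is exactly the transitivity property established at the start. One must also use the near-minimality of $x_{n+1}$ over $S(x_n)$, not over $S(x_{n+1})$, to obtain the diameter bound.
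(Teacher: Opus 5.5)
Your proof is correct and complete. It is the standard nested-closed-sets proof of Ekeland's principle: transitivity of $S(\cdot)$, near-minimization of $f$ over $S(x_n)$ to get $\operatorname{diam} S(x_{n+1})\le 2^{-n}/\alpha$, and then Cantor's intersection theorem. Each of the three conclusions is verified correctly. The fact that $\bar x\in\operatorname{dom} f$, which is needed so that $S(\bar x)$ is defined in step 3, follows from your item 1.

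The paper gives no proof of this lemma. It quotes it as the classical Ekeland variational principle and uses it only as a tool in the (iii)$\Rightarrow$(i) part of Theorem~\ref{TTWCJK}. Your argument therefore supplies the standard proof that the paper takes for granted, and there is no competing route to compare it with.
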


\section{Equivalent  conditions  for   error  bounds of \(\theta\)}

In this section we establish several equivalent characterizations of the global error bound property \eqref{EWJ2} for the function \(\theta\) defined on a nonempty set \(A\).  
We first introduce the sublevel–set mapping \(\Phi(\cdot)\) and the global slope \(m(\cdot)\), and then prove that the error bound is equivalent to a uniform descent condition, the existence of descent directions,  the strengthened descent    condition,  and a strictly positive lower bound for the slope (Theorem~\ref{TTWCJK}).  
Afterwards, in Theorem~\ref{XLWCJK}, we relate the error bound to an extended–domain infimum, perturbation stability, a global estimate involving the distance to \(A\), Hausdorff stability of the sublevel sets, and an inverse–sublevel–set estimate.  
Together, these results substantially extend classical error bound criteria and illustrate the interplay between metric regularity, descent directions, and slope conditions for general   functions.

The  sublevel–set mapping $\Phi:[0,+\infty)\to   2^A$ is defined by
$$\Phi(t)=\{x\in A:\theta(x)\le t\}, \quad  \forall t \in [0,+\infty).$$

For any \(x\in A\) with \(\theta(x)>0\),  the  global slope at \(x\) is defined by
\[
m(x):=\sup_{\substack{y\in A\\ \theta(y)<\theta(x)}}\frac{\theta(x)-\theta(y)}{\|x-y\|}\in[0,+\infty].
\]

\begin{theorem}  \label{TTWCJK}  Let $X$   be a Banach space  and $A \subseteq X$ be a nonempty closed   set. Assume that $\theta$ is lower semicontinuous on $A$. Then   the following five statements are equivalent:
	\begin{enumerate}
		\item[(i)] $\theta$ has an error bound on $A$: there exists $\tau>0$ such that
		\begin{equation}\label{EBAQJ} 
			d(x,S_0)\le\tau \theta(x), \quad  \forall x\in A.
		\end{equation}		
		\item[(ii)]  There exist  \(\alpha > 0\) and \(\beta \in (0,1)\) such that for every \(x \in A\) one can find \(y \in A\) satisfying
		\begin{equation}\label{EYZXJ} 
			\|x - y\| \le \alpha\,\theta(x) \quad\text{and}\quad \theta(y) \le \beta\,\theta(x).
		\end{equation}	
		\item[(iii)] There exists   \(c>0\) such that for each \(x\in A\setminus S_0\), one can find \(y\in A\) satisfying
		\begin{equation}\label{EevL}
			y\neq x \quad\text{and}\quad
			\theta(y)\le \theta(x)-c\|x-y\| .
		\end{equation}
		\item[(iv)]   There exists   \(\eta>0\) such that for every \(x\in A\setminus S_0\), one can find  \(y\in A\) satisfying
		\[
		\theta(y) \le \theta(x) - \eta\,\|x-y\| \qquad\text{and}\qquad \theta(y) \le \theta(x) - \eta\,d(x,S_0).
		\]
		\item[(v)]   There exists   \(\gamma >0\) such that
		\[
		m(x)\ge \gamma,\quad\forall x\in A\setminus S_0.
		\]
	\end{enumerate}
\end{theorem}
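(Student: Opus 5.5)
I would organize the proof as a cycle of implications, inserting Ekeland's variational principle (Lemma~\ref{ELdaz}) at the one place where a genuine existence argument is needed. The easy chain is (i)$\Rightarrow$(iv)$\Rightarrow$(iii)$\Leftrightarrow$(v). The substantive direction is (iii)$\Rightarrow$(i), together with (i)$\Leftrightarrow$(ii).

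For (i)$\Rightarrow$(iv), fix $x\in A\setminus S_0$, so that $d(x,S_0)\le\tau\theta(x)<2\tau\theta(x)$. Choose $y\in S_0$ with $\|x-y\|\le 2\tau\theta(x)$. Then $\theta(y)=0$, and with $\eta=1/(2\tau)$ we get $\theta(x)-\eta\|x-y\|\ge 0=\theta(y)$. Since $d(x,S_0)\le\|x-y\|$, the second inequality in (iv) follows as well. For (iv)$\Rightarrow$(iii), the same $y$ works: because $\theta(y)\le\theta(x)-\eta d(x,S_0)$ and $S_0$ is closed, we have $d(x,S_0)>0$ (closedness of $S_0$ comes from lower semicontinuity and closedness of $A$). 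Hence $\theta(y)<\theta(x)$ and $y\ne x$. For (iii)$\Leftrightarrow$(v): given (iii), the point $y$ satisfies $\theta(y)<\theta(x)$, and the corresponding quotient is $\ge c$, so $m(x)\ge c$. Conversely, from $m(x)\ge\gamma$ we pick $y$ with quotient $>\gamma/2$, which gives (iii) with $c=\gamma/2$.

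For (i)$\Rightarrow$(ii), take $\beta=1/2$ (any $\beta$ works). If $\theta(x)=0$, let $y=x$. Otherwise pick $y\in S_0$ with $\|x-y\|\le 2\tau\theta(x)$, so $\alpha=2\tau$. For (ii)$\Rightarrow$(i), iterate: starting from $x_0=x$, produce $x_{k+1}$ with $\theta(x_{k+1})\le\beta^{k+1}\theta(x)$ and $\|x_{k+1}-x_k\|\le\alpha\beta^k\theta(x)$. The sequence is Cauchy, so it converges to some $\bar x\in A$ because $A$ is closed. Lower semicontinuity then gives $\theta(\bar x)\le 0$, so $\bar x\in S_0$. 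Finally $\|x-\bar x\|\le \alpha\theta(x)/(1-\beta)$, so (i) holds with $\tau=\alpha/(1-\beta)$. This direction needs $\theta(x)<\infty$; if $\theta(x)=+\infty$, (i) is trivial.

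The main obstacle is (iii)$\Rightarrow$(i), where I would use Ekeland's principle. Fix $x_0\in A$ with $0<\theta(x_0)<\infty$, and apply Lemma~\ref{ELdaz} on the complete metric space $A$ to $\theta|_A$. This function is bounded below by $0$ with infimum $0$, so we may take $\varepsilon=\theta(x_0)$ and $\lambda=\theta(x_0)/c'$ for some fixed $c'\in(0,c)$. We obtain $\bar x$ with $\|x_0-\bar x\|\le\theta(x_0)/c'$ and
\[
\theta(z)>\theta(\bar x)-c'\|z-\bar x\| \quad \text{for all } z\ne\bar x .
\]
If $\bar x\notin S_0$, then (iii) gives some $y\ne\bar x$ with $\theta(y)\le\theta(\bar x)-c\|y-\bar x\|<\theta(\bar x)-c'\|y-\bar x\|$. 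This contradicts the Ekeland inequality. Hence $\bar x\in S_0$ and $d(x_0,S_0)\le\theta(x_0)/c'$. Letting $c'\uparrow c$, or simply fixing $c'=c/2$, yields (i) with $\tau=1/c'$.

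The delicate points in this step are the ones I would check carefully. The strict inequality is needed to produce the contradiction, which is why $c'<c$ is used. Properness must hold on $A$ for Ekeland's principle to apply. Points with $\theta=+\infty$ must be handled separately, where (i) is trivial.
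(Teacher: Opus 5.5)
Your proof is correct and follows essentially the paper's argument. It uses the same near-nearest point of $S_0$ for (i)$\Rightarrow$(ii) and (i)$\Rightarrow$(iv), the same geometric iteration for (ii)$\Rightarrow$(i), and Ekeland's principle on $A$ for (iii)$\Rightarrow$(i). The differences are cosmetic: you obtain (v) from (iii) instead of directly from (i), and your $c'<c$ is unnecessary because Ekeland's conclusion is already strict, so $c'=c$ (as in the paper) works and gives $\tau=1/c$.
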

\begin{proof}   	\noindent\textbf{(i) $\Rightarrow$ (ii)}.
	Assume  that   there exists $\tau>0$ such that  (\ref{EBAQJ}) holds.   
	Fix an arbitrary \(x \in A\). 
	
	If \(\theta(x) = 0\), then \(x \in S_0\) and taking \(y = x\) fulfills the requirement.
	
	If \(\theta(x) > 0\),  by   (\ref{EBAQJ}),     there exists \(x' \in S_0\) such that
	\[
	\|x - x'\| < d(x,S_0) + \theta(x) \le \tau\theta(x) + \theta(x) = (\tau+1)\theta(x).
	\]
	Set \(y = x'\),   then \(y \in A\) and \(\theta(y) = 0 \le \beta\theta(x)\) for any \(\beta \in (0,1)\). Thus (ii) holds with constant \(\alpha = \tau+1\) and an arbitrary \(\beta \in (0,1)\).

	\noindent\textbf{(ii) $\Rightarrow$ (i)}.
	Assume there exist \(\alpha>0\) and \(\beta \in (0,1)\) satisfying  (\ref{EYZXJ}). Set \(\tau = \frac{\alpha}{1-\beta}\).
	Fix an arbitrary \(x \in A\). 
	
	If \(\theta(x)=0\), then \(x \in S_0\) and the conclusion is trivial. 
	
	Suppose \(\theta(x) > 0\). Starting from \(x_0 = x\), we recursively construct a sequence \(\{x_n\} \subseteq A\). 
	Assuming \(x_n\) is already obtained, if \(\theta(x_n) = 0\) we stop,  otherwise by condition (\ref{EYZXJ}) there exists \(x_{n+1} \in A\) such that
	\[
	\|x_n - x_{n+1}\| \le \alpha \theta(x_n), \qquad \theta(x_{n+1}) \le \beta \theta(x_n).
	\]
	Iterating these inequalities yields, for all steps,
	\[
	\theta(x_n) \le \beta^n \theta(x), \qquad \|x_n - x_{n+1}\| \le \alpha \beta^n \theta(x).
	\]
	Since \(0 < \beta < 1\), the series \(\sum_{n=0}^\infty \beta^n\) converges, hence \(\{x_n\}\) is a Cauchy sequence. As a closed subset of a Banach space, \(A\) is complete, so there exists  \(\bar{x} \in A\) with \(x_n \to \bar{x}\). 
	By the       lower semicontinuity of \(\theta\), we get 
	\[
	\theta(\bar{x}) \le    	\liminf_{n \to \infty} \theta(x_n) \le   	\liminf_{n \to \infty} \beta^n \theta(x) = 0.
	\]
	Since \(\theta\) is nonnegative on \(A\), we obtain \(\theta(\bar{x}) = 0\), i.e., \(\bar{x} \in S_0\).   Then, we have
	\[ d(x,S_0) \le
	\|x - \bar{x}\| \le \sum_{n=0}^{\infty} \|x_n - x_{n+1}\|
	\le \sum_{n=0}^{\infty} \alpha \beta^n \theta(x)
	= \frac{\alpha}{1-\beta}\,\theta(x) = \tau \theta(x).
	\]
	This establishes (i).
	
	\noindent\textbf{(i) $\Rightarrow$ (iv)}.  Assume that there exists \(\tau>0\) such that   (\ref{EBAQJ}) holds.
	Set \(\eta:=\frac{1}{2\tau}>0\). 
	Take any \(x\in A\setminus S_0\). Then \(\theta(x)>0\) and     it follows from  (\ref{EBAQJ})  that     \(d(x,S_0)\le\tau\theta(x)\). By definition of the distance, there exists \(y\in S_0\) such that
	\begin{equation}\label{EghuB1}
		\|x-y\|<d(x,S_0)+\tau\theta(x)\le 2\tau\theta(x)=\frac{1}{\eta}\theta(x).
	\end{equation}
	Since \(y\in S_0\), we have \(\theta(y)=0\). Combining this with \eqref{EghuB1} gives
	\[
	\theta(y)=0\le \theta(x)-\eta \|x-y\|.
	\]
	Noting that \(d(x,S_0)\le \|x-y\|\), we obtain
	\[
	\theta(y)=0\le \theta(x)- \eta  \|x-y\|   \le \theta(x) - \eta\,d(x,S_0).
	\]

	\noindent\textbf{(iv) $\Rightarrow$ (iii)}.  Suppose that there exists   \(c >0\) such that for every \(x\in A\setminus S_0\), one can find  \(y\in A\) satisfying
	\[
	\theta(y) \le \theta(x) - c \,\|x-y\| \qquad\text{and}\qquad \theta(y) \le \theta(x) - c \,d(x,S_0).
	\]
	It suffices to show that $y\neq x$.
	Since $\theta$ is lower semicontinuous on $A$ and $A$ is closed, it is easy to see that $S_0$ is closed.  Due to \(x\in A\setminus S_0\), we have $d(x,S_0) >0$.  Hence, 
	$$\theta(y) \le \theta(x) - c \,d(x,S_0) <  \theta(x), $$
	which implies $y\neq x$.

	\noindent\textbf{(i) $\Rightarrow$ (v)}.
	Assume that there exists \(\tau>0\) such that   (\ref{EBAQJ}) is true.
	Take an arbitrary \(x\in A\setminus S_0\). Then \(\theta(x)>0\)  and	$	d:=d(x,S_0)>0 .$
	For any \(\varepsilon>0\), there exists \(y_\varepsilon\in S_0\) satisfying
	\begin{equation}\label{Eapprox1}
		\|x-y_\varepsilon\|<d+\varepsilon .
	\end{equation}
	Since \(y_\varepsilon\in S_0\), we have \(\theta(y_\varepsilon)=0<\theta(x)\).  so \(y_\varepsilon\) belongs to the set over which the supremum defining \(m(x)\) is taken.   It follows from  (\ref{Eapprox1})   and    \(\theta(x)>0\)  that
	\begin{equation}\label{Eapprox2}
		\frac{\theta(x)-\theta(y_\varepsilon)}{\|x-y_\varepsilon\|}=\frac{\theta(x)}{\|x-y_\varepsilon\|}>\frac{\theta(x)}{d+\varepsilon}.
	\end{equation}
	From (\ref{EBAQJ}) we obtain \(\theta(x)\ge d/\tau\), and so
	\begin{equation}\label{Eapprox3}
		\frac{\theta(x)}{d+\varepsilon}\ge\frac{d/\tau}{d+\varepsilon}=\frac{1}{\tau}\cdot\frac{d}{d+\varepsilon}.
	\end{equation}
	Therefore, for this \(y_\varepsilon\),  we conclude from  (\ref{Eapprox2})  and  (\ref{Eapprox3})  that
	\[
	m(x)\ge\frac{\theta(x)-\theta(y_\varepsilon)}{\|x-y_\varepsilon\|}>\frac{1}{\tau}\cdot\frac{d}{d+\varepsilon}.
	\]
	Letting \(\varepsilon\to0^{+}\), the right-hand side tends to \(\gamma:=\frac{1}{\tau}\),  hence \(m(x)\ge \gamma\). Since \(x\in A\setminus S_0\) was arbitrary, this proves (v).

	\noindent\textbf{(v) $\Rightarrow$ (iii)}.
	Suppose   that there exists   \(\gamma >0\) such that
	\[
	m(x)\ge \gamma,\quad \forall x\in A\setminus S_0.
	\]
	Set \(c :=\dfrac{\gamma}{2}>0\). For any \(x\in A\setminus S_0\), since \(m(x)\ge \gamma > c\), by the definition of supremum, there exists \(y\in A\) satisfying \(\theta(y)<\theta(x)\) and
	\[
	\frac{\theta(x)-\theta(y)}{\|x-y\|} > c,
	\]
	and so \(\theta(y) < \theta(x) - c \|x-y\|\).   It follows from \(\theta(y)<\theta(x)\)  that    \(y\neq x\). Therefore (iii) holds (with constant \(c =\dfrac{\gamma}{2}\)).
	
	\noindent\textbf{(iii) $\Rightarrow$ (i)}.
	Assume that  there exists   \(c>0\) such that     (\ref{EevL})   holds.
	We will prove that for all \(x\in A\),
	\[
	d(x,S_0)\le \frac{1}{c}\,\theta(x).
	\]
	Suppose, to the contrary, that there exists \(x_0\in A\) such that  
	\begin{equation}\label{Econtra1}
		d(x_0,S_0)>\frac{1}{c}\,\theta(x_0).
	\end{equation}
	This yields  that	 \(\theta(x_0)>0\). 
	Consider the restriction \(f:=\theta|_A\colon A\to[0,\infty)\) on the complete metric space \(A\) (a closed subset of a Banach space). It is clear that \(f\) is lower semicontinuous  and \(\inf_{x \in A} f(x)=0\).  
	Set
	\[
	\varepsilon:=f(x_0)=\theta(x_0)>0,\qquad \lambda:=\frac{\varepsilon}{c}=\frac{\theta(x_0)}{c}>0 .
	\]
	Then \(f(x_0)\le \inf_{x \in A} f(x) +\varepsilon\).    Lemma  \ref{ELdaz} (Ekeland's variational principle)   yields that  there exists \(\bar x\in A\) satisfying:
	\begin{itemize}
		\item[(a)] \(f(\bar x)\le f(x_0)\);
		\item[(b)] \(\|\bar x-x_0\|\le \lambda\);
		\item[(c)] for all \(x\in A\setminus\{\bar x\}\), \(f(x) > f(\bar x)-\dfrac{\varepsilon}{\lambda}\,\|x-\bar x\|\).
	\end{itemize}
	If \(\bar x \in  S_0\), then  it follows from  (b) that
	\[
	d(x_0,S_0)\le \|x_0-\bar x\|\le \lambda = \frac{1}{c}\theta(x_0),
	\]
	which contradicts (\ref{Econtra1}). Hence   \(\bar x \notin  S_0\),  and so   \(\theta(\bar x)=f(\bar x)>0\).   (c) can be rewritten as
	\begin{equation}\label{Econtra2}
		\forall x\in A\setminus\{\bar x\},\qquad \theta(x) > \theta(\bar x) - c\,\|x-\bar x\|,
	\end{equation}
	because \(\varepsilon/\lambda = c\).
	Now apply  (\ref{EevL})   to \(\bar x\in A\setminus S_0\),  there exists \(y_0 \in A\) with \(y_0 \neq \bar x\) such that
	\begin{equation}\label{Econtra3}
		\theta(y_0 ) \le \theta(\bar x) - c\,\|\bar x-y_0\|.
	\end{equation}
	It follows from  (\ref{Econtra2})  that		$\theta(y_0) > \theta(\bar x) - c\,\|y_0-\bar x\|,$
	which contradicts (\ref{Econtra3}). 
	Therefore
	\[
	d(x,E_w)\le \frac{1}{c}\,\theta(x), \quad \forall x\in A,
	\]
	which is exactly statement (i) with \(\tau = 1/c\).  	This completes the proof. 
	\end{proof}

\begin{theorem} \label{XLWCJK}  Consider the following statements:
	\begin{enumerate}
		\item[(i)] $\theta$ has an error bound on $A$: there exists $\tau>0$ such that
		\begin{equation}\label{EBAQJ11} 
			d(x,S_0)\le\tau \theta(x), \quad  \forall x\in A.
		\end{equation}		 
		
		\item[(ii)]  There exists   \(\kappa > 0\) such that  
		$$			d(x, S_0) \le \kappa \, \zeta(x), \quad \forall x \in X, 		$$
		where  $\zeta(x) := \inf_{a \in A} \bigl( \theta(a) + \|x - a\| \bigr)$.
		
		\item[(iii)]  There exists \(\kappa > 0\) such that  
		\begin{equation}\label{EBkkQJ} 
			d(x+e, S_0) \le \kappa (\theta(x) + \|e\|),  \quad  \forall x \in A, \;    \forall e \in X.
		\end{equation}	
		
		\item[(iv)]  There exists  $\gamma>0$ such that  
		\begin{equation}\label{EBkXJ} 
			d(x,S_0)\le \gamma\bigl(\max\{\theta(x),0\}+d(x, A)\bigr),\quad \forall x\in X .
		\end{equation}	
		
		\item[(v)] There exist \(t_0 > 0\) and \(\beta > 0\) such that 
		\[
		d_H(\Phi(t), S_0) \le \beta t,  \quad  \forall t \in [0, t_0].
		\]
		
		\item[(vi)] There exist $\kappa>0$ and an open set $U\supseteq S_0$ such that \begin{equation}\label{EBkmJ} 
			d(x,\Phi(0))\le\kappa\,d(0,\Phi^{-1}(x)),       \quad  \forall       x\in U\cap A. 	
		\end{equation}	 
	\end{enumerate}
	Then the following hold:
	\begin{enumerate}
		\item      (i) $\Leftrightarrow$ (ii), (iii) $\Rightarrow$ (i), (iv) $\Rightarrow$ (i),  (i) $\Rightarrow$ (v) and (i) $\Rightarrow$ (vi). 
		\item  If $\theta$ is Lipschitz continuous on $X$ with constant $L>0$,
		then (i) $\Rightarrow$ (iii) and (i) $\Rightarrow$ (iv).
		\item  If $A$ is bounded, then (v) $\Rightarrow$ (i).
		\item  	If $A$ is compact  and $\theta$ is lower semicontinuous on $A$, then (vi) $\Rightarrow$ (i).  
	\end{enumerate}
\end{theorem}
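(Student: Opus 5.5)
The plan is to verify each implication directly from the definitions, using the triangle inequality for the distance function throughout. Only the two converse statements in items 3 and 4 need a genuine argument, and both will be a compactness/boundedness splitting.

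\textbf{Part 1.} For (i) $\Rightarrow$ (ii), fix $x\in X$ and $a\in A$. Then
\[
d(x,S_0)\le \|x-a\|+d(a,S_0)\le \|x-a\|+\tau\theta(a)\le \max\{1,\tau\}\bigl(\theta(a)+\|x-a\|\bigr).
\]
Taking the infimum over $a\in A$ gives (ii) with $\kappa=\max\{1,\tau\}$; the case $\theta(a)=+\infty$ is harmless. For (ii) $\Rightarrow$ (i), take $a=x$ in the infimum, which gives $\zeta(x)\le\theta(x)$ for $x\in A$. The implication (iii) $\Rightarrow$ (i) follows by taking $e=0$. The implication (iv) $\Rightarrow$ (i) follows because $d(x,A)=0$ and $\max\{\theta(x),0\}=\theta(x)$ for $x\in A$.

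For (i) $\Rightarrow$ (v), note that $S_0\subseteq\Phi(t)$, so $g(S_0,\Phi(t))=0$. For $x\in\Phi(t)$ we have $d(x,S_0)\le\tau\theta(x)\le\tau t$. Hence $d_H(\Phi(t),S_0)\le\tau t$ for every $t\ge0$, and (v) holds with $\beta=\tau$ and any $t_0$. For (i) $\Rightarrow$ (vi), observe that $\Phi(0)=S_0$ and, for $x\in A$, $\Phi^{-1}(x)=\{t\ge0:\theta(x)\le t\}$. Thus $d(0,\Phi^{-1}(x))=\theta(x)$, and (vi) holds with $U=X$ and $\kappa=\tau$.

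\textbf{Part 2 (Lipschitz $\theta$).} For (i) $\Rightarrow$ (iii), use $d(x+e,S_0)\le d(x,S_0)+\|e\|\le\tau\theta(x)+\|e\|$, so $\kappa=\max\{\tau,1\}$ works; this step does not even need the Lipschitz assumption. For (i) $\Rightarrow$ (iv), fix $x\in X$ and $a\in A$. Then $\theta(a)\le\theta(x)+L\|x-a\|\le\max\{\theta(x),0\}+L\|x-a\|$, so
\[
d(x,S_0)\le\|x-a\|+\tau\theta(a)\le\tau\max\{\theta(x),0\}+(1+\tau L)\|x-a\|.
\]
Taking the infimum over $a\in A$ yields (iv) with $\gamma=\max\{\tau,1+\tau L\}$.

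\textbf{Parts 3 and 4.} Let $D:=\sup_{x,y\in A}\|x-y\|<\infty$. Since $S_0\subseteq A$, this gives $d(x,S_0)\le D$ for all $x\in A$.

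For (v) $\Rightarrow$ (i), split $A$ into two cases. If $\theta(x)\le t_0$, then $x\in\Phi(\theta(x))$, so $d(x,S_0)\le d_H(\Phi(\theta(x)),S_0)\le\beta\theta(x)$. If $\theta(x)>t_0$, then $d(x,S_0)\le D\le (D/t_0)\theta(x)$. Hence (i) holds with $\tau=\max\{\beta,D/t_0\}$.

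For (vi) $\Rightarrow$ (i), the computation of Part 1 gives $d(x,S_0)\le\kappa\theta(x)$ on $U\cap A$. The remaining set $A\setminus U$ is closed in the compact set $A$, hence compact. If it is empty we are done. Otherwise, $\theta$ is lower semicontinuous and attains its minimum $\mu$ on $A\setminus U$. Moreover $\mu>0$, because $S_0\subseteq U$ forces $\theta>0$ on $A\setminus U$. Then $d(x,S_0)\le D\le (D/\mu)\theta(x)$ there, and $\tau=\max\{\kappa,D/\mu\}$ works.

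The main point requiring care is this last step: it must use that a lower semicontinuous function attains its infimum on a compact set, and that this infimum is strictly positive because $\theta$ vanishes only on $S_0\subseteq U$.
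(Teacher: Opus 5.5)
Your proposal is correct. For most items it coincides with the paper's argument: the same triangle-inequality estimate with $\kappa=\max\{\tau,1\}$ for (i)$\Rightarrow$(ii), the same diameter splitting for (v)$\Rightarrow$(i) and (vi)$\Rightarrow$(i), and the same constant $\gamma=\max\{\tau,1+\tau L\}$ for (i)$\Rightarrow$(iv).

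The genuine difference is (i)$\Rightarrow$(iii). The paper approximates $x+e$ by a point $a_\varepsilon\in A$, applies the error bound at $a_\varepsilon$, and uses the Lipschitz continuity of $\theta$ twice, ending with $\kappa=\tau+2\tau L+1$. You instead use that $d(\cdot,S_0)$ is $1$-Lipschitz and that the base point $x$ already lies in $A$. This gives $\kappa=\max\{\tau,1\}$ with no assumption on $\theta$ at all. Your route therefore shows that (iii) is equivalent to (i) unconditionally. It also follows directly from (ii), since $\zeta(x+e)\le\theta(x)+\|e\|$ whenever $x\in A$. So the Lipschitz hypothesis in item 2 matters only for (i)$\Rightarrow$(iv), where the values of $\theta$ off $A$ enter.

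For (i)$\Rightarrow$(iv), you replace the paper's $\varepsilon$-approximation and sign-based case split by the single estimate $\theta(a)\le\max\{\theta(x),0\}+L\|x-a\|$, followed by an infimum over $a\in A$. This is cleaner but not substantively different.

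Finally, you explicitly handle the case $A\setminus U=\emptyset$ in (vi)$\Rightarrow$(i). This closes a small gap in the paper, which takes $\min_{H}\theta$ without checking that $H$ is nonempty.
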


\begin{proof}
	\noindent\textbf{(i) $\Rightarrow$ (ii)}.
	Suppose there exists \(\tau > 0\) such that   (\ref{EBAQJ11}) is true.
	Take an arbitrary \(x \in X\). For any \(a \in A\),   it follows from  (\ref{EBAQJ11})  that
	\[
	d(x, S_0) \le \|x - a\| + d(a, S_0) \le \|x - a\| + \tau \, \theta(a).
	\]
	Set \(\kappa := \max\{\tau, 1\}\). Then  
	$  \|x - a\| + \tau \, \theta(a) \le \kappa \bigl( \|x - a\| + \theta(a) \bigr),$  and so
	$$d(x, S_0) \le \kappa (\|x - a\| + \theta(a)),  \quad  \forall a \in A.  $$
	Taking the infimum over \(a \in A\) yields  
	\[
	d(x, S_0) \le \kappa \inf_{a \in A} \bigl( \theta(a) + \|x - a\| \bigr) = \kappa \, \zeta(x).
	\]
	Since \(x \in X\) is arbitrary, this proves (ii).

	\noindent\textbf{(ii) $\Rightarrow$ (i)}.
	Suppose there exists \(\kappa > 0\) such that 
	\begin{equation}\label{ErtB1}
		d(x, S_0) \le \kappa \, \zeta(x), \quad  \forall x \in X.
	\end{equation}
	Take an arbitrary \(x \in A\). From the definition of \(\zeta\) we have  
	\[
	\zeta(x) = \inf_{a \in A} \bigl( \theta(a) + \|x - a\| \bigr) \le \theta(x) + \|x - x\| = \theta(x).
	\]
	Substituting this into    (\ref{ErtB1})   gives  
	\[
	d(x, S_0) \le \kappa \, \theta(x), \quad \forall x \in A.
	\]
	
	\noindent\textbf{(iii) $\Rightarrow$ (i)}.
	Suppose  that there exists \(\kappa > 0\) such that  (\ref{EBkkQJ}) holds.   
	Take any \(x \in A\) and set \(e = 0\). From (\ref{EBkkQJ}) we immediately obtain
	\[
	d(x, S_0) = d(x+0, S_0) \le \kappa (\theta(x) + 0) = \kappa \theta(x).
	\]

	\noindent\textbf{(iv) $\Rightarrow$ (i)}.
	Assume that  there exists $\gamma>0$ such that (\ref{EBkXJ}) is satisfied.
	Take any $x\in A$. Then $d(x, A)=0$  and   $\theta(x)\ge0$, and so $\max\{\theta(x),0\}=\theta(x)$. Substituting into (\ref{EBkXJ}) immediately yields
	\[
	d(x,S_0)\le \gamma\,\theta(x),\quad \forall x\in A.
	\]

	\noindent\textbf{(i) $\Rightarrow$ (v)}.
	Suppose that  there exists \(\tau > 0\) such that   (\ref{EBAQJ11})  is satisfied.
	
	Take arbitrary \(t \ge 0\) and \(x \in \Phi(t)\). Since \(x \in A\) and \(\theta(x) \le t\),   (\ref{EBAQJ11})  yields
	\[
	d(x, S_0) \le \tau \theta(x) \le \tau t.
	\]
	Because \(S_0 \subseteq \Phi(t)\), clearly \(\sup_{y \in S_0} d(y, \Phi(t)) = 0\). Hence the Hausdorff distance satisfies
	\[
	d_H(\Phi(t), S_0) = \sup_{x \in \Phi(t)} d(x, S_0) \le \tau t, \quad \forall t \ge 0.
	\]
	Taking \(t_0 = 1\) and \(\beta = \tau\), we have for all \(t \in [0, t_0]\) that \(d_H(\Phi(t), S_0) \le \beta t\).

	\noindent\textbf{(i) $\Rightarrow$ (vi)}.
	By the definition of $\Phi$, clearly $\Phi(0)=\theta^{-1}(0)\cap A= S_0$.
	For a fixed $x\in A$,
	\[
	\Phi^{-1}(x)=\{t\ge0:\theta(x)\le t\}=[\theta(x),+\infty),
	\]
	and	so $d(0,\Phi^{-1}(x))=\inf\{|t-0|:t\ge\theta(x)\}=\theta(x)$.
	Therefore,      (\ref{EBkmJ})    is equivalent to
	\begin{equation}\label{EViiiD}
		d(x, S_0)\le\kappa\,\theta(x),\quad\forall x\in U\cap A.
	\end{equation}
	If (i) holds, taking $U=X$ shows that (vi) holds.

	\noindent\textbf{(i) $\Rightarrow$ (iii)}.
	Suppose there exists \(\tau > 0\)  such that   (\ref{EBAQJ11}) holds. Take arbitrary \(x \in A\) and \(e \in X\).
	
	It follows from  \(x \in A\)  that \(d(x+e, A) \le \|x+e - x\| = \|e\|\).
	For any \(\varepsilon > 0\), choose \(a_\varepsilon \in A\) such that
	\begin{equation}\label{eTHVDD1}
		\|x+e - a_\varepsilon\| \le d(x+e, A) + \varepsilon \le \|e\| + \varepsilon,
	\end{equation}
	Since  \(\theta\) is Lipschitz continuous on \(X\) with constant \(L\),  by (\ref{eTHVDD1}), we have 
	\[
	|\theta(x+e) - \theta(a_\varepsilon)| \le L \|(x+e) - a_\varepsilon\| \le L(\|e\| + \varepsilon),
	\]
	hence
	\begin{equation}\label{eTHVDD2}
		\theta(a_\varepsilon) \le \theta(x+e) + L(\|e\| + \varepsilon).
	\end{equation}
	We conclude from  \(a_\varepsilon \in A\),  (\ref{EBAQJ11})  and  (\ref{eTHVDD2})  that
	\begin{equation}\label{eTHVDD3}
		d(a_\varepsilon, S_0) \le \tau \theta(a_\varepsilon) \le \tau\bigl(\theta(x+e) + L(\|e\| + \varepsilon)\bigr).
	\end{equation}
	It follows from  (\ref{eTHVDD1})  and  (\ref{eTHVDD3})    that
	\[
	\begin{aligned}
		d(x+e, S_0) &\le \|x+e - a_\varepsilon\| + d(a_\varepsilon, S_0) \\
		&\le (\|e\| + \varepsilon) + \tau\bigl(\theta(x+e) + L(\|e\| + \varepsilon)\bigr) \\
		&= \tau \theta(x+e) + (1 + \tau L)(\|e\| + \varepsilon).
	\end{aligned}
	\]
	Letting \(\varepsilon \to 0^+\), we obtain
	\begin{equation}\label{eq:d-xe}
		d(x+e, S_0) \le \tau \theta(x+e) + (1 + \tau L)\|e\|.
	\end{equation}
	Since  \(\theta\) is Lipschitz continuous, \(\theta(x+e) \le \theta(x) + L\|e\|\). Substituting into \eqref{eq:d-xe} yields
	\[
	\begin{aligned}
		d(x+e, S_0) &\le \tau\bigl(\theta(x) + L\|e\|\bigr) + (1 + \tau L)\|e\| \\
		&= \tau \theta(x) + (2\tau L + 1)\|e\|.
	\end{aligned}
	\]
	Due to  \(\theta(x) \ge 0\) and \(\|e\| \ge 0\), we have
	\[
	\tau \theta(x) + (2\tau L + 1)\|e\| \le (\tau + 2\tau L + 1)(\theta(x) + \|e\|).
	\]
	Setting \(\kappa = \tau + 2\tau L + 1\), we obtain
	$	d(x+e, S_0) \le \kappa (\theta(x) + \|e\|),$
	which means that  (\ref{EBkkQJ}) holds.

	\noindent\textbf{(i) $\Rightarrow$ (iv)}.
	Assume  that   there exists $\tau>0$ such that  (\ref{EBAQJ11}) holds.
	Set  
	$ \gamma := \max\{\,\tau,\;1+\tau L\,\}.$
	Let  $x\in X$      be arbitrary.   For   any    $\varepsilon>0$,  there exists $a_\varepsilon\in A$ such that
	\begin{equation}\label{EvgyB1}	\|x-a_\varepsilon\| < d(x, A)+\varepsilon  .        
	\end{equation}
	It follows from    (\ref{EBAQJ11})   and  (\ref{EvgyB1})    that   
	\begin{equation}\label{EvgyB2}
		d(x,S_0) \le \|x-a_\varepsilon\| + d(a_\varepsilon,S_0) < d(x, A)+\varepsilon + \tau\,\theta(a_\varepsilon) .
	\end{equation}
	Since  \(\theta\) is Lipschitz continuous on \(X\) with constant \(L\),  due to (\ref{EvgyB1}),  we have
	\begin{equation} \label{EvgyB3}
		\theta(a_\varepsilon) \le \theta(x) + L\|a_\varepsilon-x\| 
		< \theta(x) + L\bigl(d(x, A) + \varepsilon\bigr).  
	\end{equation}
	
	Now we distinguish two cases according to the sign of $\theta(x)$.
	
	\noindent\textbf{Case 1: $\theta(x)\ge 0$.}
	Then $\max\{\theta(x),0\}=\theta(x)$. Substituting (\ref{EvgyB3}) into (\ref{EvgyB2}),
	\[
	\begin{aligned}
		d(x,S_0) &< d(x, A)+\varepsilon + \tau\bigl(\theta(x)+L(d(x, A)+\varepsilon)\bigr) \\
		&= \tau\theta(x) + (1+\tau L)d(x, A) + (1+\tau L)\varepsilon.
	\end{aligned}
	\]
	Since $\tau\le\gamma$, $1+\tau L\le\gamma$,     $\theta(x)\ge0$  and  $  d(x, A)\ge0$, we have
	\[
	\tau\theta(x) + (1+\tau L) d(x, A) \le \gamma\theta(x) + \gamma d(x, A) = \gamma\bigl(\theta(x)+d(x, A)\bigr).
	\]
	Thus
	$	d(x,S_0) < \gamma\bigl(\theta(x)+d(x, A) \bigr) + (1+\tau L)\varepsilon.$
	
	\noindent\textbf{Case 2: $\theta(x)<0$.}
	Then $\max\{\theta(x),0\}=0$. Due to (\ref{EvgyB3})  and   $\theta(x)<0$, we have
	$\theta(a_\varepsilon) < L\bigl(d(x, A)+\varepsilon\bigr)$.
	Substituting into (\ref{EvgyB2}), we have
	\[
	\begin{aligned}
		d(x,S_0) &< d(x, A) +\varepsilon + \tau \cdot L\bigl(d(x, A) +\varepsilon\bigr) \\
		&= (1+\tau L)d(x, A) + (1+\tau L)\varepsilon.
	\end{aligned}
	\]
	Since $1+\tau L\le\gamma$ and in this case $\max\{\theta(x),0\}+d(x, A) = d(x, A)$, we obtain
	\[
	d(x,S_0) < \gamma\bigl(\max\{\theta(x),0\}+d(x, A) \bigr) + (1+\tau L)\varepsilon.
	\]
	
	Combining both cases, for any $\varepsilon>0$ we have
	\[
	d(x,S_0) < \gamma\bigl(\max\{\theta(x),0\}+d(x, A) \bigr) + (1+\tau L)\varepsilon.
	\]
	Letting $\varepsilon\to0^+$, we get
	$	d(x,S_0) \le \gamma\bigl(\max\{\theta(x),0\}+d(x, A) \bigr).	$
	This means that (\ref{EBkXJ}) is true.

	\noindent\textbf{(v) $\Rightarrow$ (i)}.
	Suppose there exist \(t_0 > 0\) and \(\beta > 0\) such that 
	\begin{equation}\label{EthhB1}
		d_H(\Phi(t), S_0) \le \beta t,  \quad  \forall t \in [0, t_0].
	\end{equation}

	Take any \(x \in A\) and set \(t = \theta(x)\). We distinguish two cases:
	
	If \(t \le t_0\), then \(x \in \Phi(t)\). By   (\ref{EthhB1}),  we have 
	\[
	d(x, S_0) \le \sup_{z \in \Phi(t)} d(z, S_0) = d_H(\Phi(t), S_0) \le \beta t = \beta \theta(x).
	\]
	
	If \(t > t_0\), since \(A\) is bounded, let \(\eta = \operatorname{diam}(A) = \sup_{a,b \in A} \|a - b\| < +\infty\). Because \(x \in A\) and \(S_0 \subseteq A\), we have \(d(x, S_0) \le \eta\). Combining this with \(t > t_0\) yields
	\[
	d(x, S_0) \le \eta < \frac{\eta}{t_0} t = \frac{\eta}{t_0} \theta(x).
	\]
	
	Setting \(\tau = \max\{\beta, \eta/t_0\}\), we obtain \(d(x, S_0) \le \tau \theta(x)\) for all \(x \in A\).

	\noindent\textbf{(vi) $\Rightarrow$ (i)}.
	Assume $A$ is compact   and there exist $\kappa>0$ and an open set $U\supseteq S_0$ such that
	(\ref{EBkmJ})  is satisfied.   
	Set $H:=A\setminus U$.   Since   $A$ is compact, $U$ is open and $S_0 \subseteq U$,  we get that  $H$ is compact   and $H\cap S_0= \emptyset$.
	$\theta$ is lower semicontinuous   on the compact set $H$, hence attains its minimum $m:=\min_{x\in H}\theta(x)$.
	If $m=0$, then there exists $\bar x\in H$ with $\theta(\bar x)=0$. This yields   that   $\bar x\in S_0$, contradicting $H\cap S_0= \emptyset$. Therefore $m>0$.
	
	For any $x\in H$, since $A$ is bounded, $\beta =\operatorname{diam}(A)<\infty$, and so $d(x,S_0)\le \beta$. Together with $\theta(x)\ge m$, we have 
	\begin{equation} \label{EHYZ2}
		d(x,S_0)\le \frac{\beta}{m}\,\theta(x).  
	\end{equation}
	
	Take any $x\in A$.  If $x\in U$, noting that  (\ref{EBkmJ})  is  equivalent to (\ref{EViiiD}), we get  
	$d(x,S_0)\le\kappa\,\theta(x)$.
	
	If $x\in H$,  it follows from  (\ref{EHYZ2}) that $d(x,S_0)\le (\beta/m)\,\theta(x)$.
	Taking $\tau:=\max\{\kappa,\; \beta/m\}$, the global error bound holds:
	\[
	d(x,S_0)\le\tau\,\theta(x),\qquad \forall x\in A.
	\]
	This completes the proof. 
\end{proof}

\begin{remark}   		 
	Theorem~\ref{TTWCJK} gives five equivalent characterizations of error bounds for a lower semicontinuous nonnegative function $\theta$ on a closed set $A$ in a Banach space: the direct distance estimate (i), the uniform descent condition (ii), the descent direction condition (iii),    the strengthened descent    condition (iv)  and the global slope lower bound condition (v). The core tool behind this result is the Ekeland variational principle.
	Theorem~\ref{XLWCJK} gives six equivalent conditions for error bounds, including: an estimate for an extension function $\zeta$ (ii), a directional perturbation estimate (iii), a global estimate with a distance term (iv), Hausdorff stability of level sets (v), and an inverse mapping estimate (vi). These conditions connect error bounds with the concepts of  stability  and  regularity.  Below we compare Theorems \ref{TTWCJK} and \ref{XLWCJK} with the results in   \cite{LiuNgYang2009, NgZheng, KL1999, NgYang2002, AC2004, Kruger, KLT2018}.

	Theorems \ref{TTWCJK} and \ref{XLWCJK}  do not require convexity, linearity, or any cone ordering. In contrast, Liu, Ng and Yang \cite{LiuNgYang2009} study merit functions tailored to vector optimization problems with a convex feasible set, a linear map, and a closed convex cone. Their error‐bound results rely on linear regularity of a pair of sets, which is specific to the convex/cone structure.
	Our slope‐based conditions (Theorem \ref{TTWCJK}) and the various stability criteria (Theorem \ref{XLWCJK}) are more general and reduce to their setting when \(\theta\) is chosen as their merit function.  Thus, our results offer a unified framework that complements their convex‐geometric approach and applies to nonconvex or non‐cone‐ordered problems as well.

	Ng and Zheng \cite{NgZheng} studied error bounds for lower semicontinuous functions using directional derivatives. Their Theorem 2.5 gives a sufficient condition \(\underline{d}^+f(x)(h_x)\le -\delta\) for general functions. However, their precise characterizations—Theorems 3.1 and 3.3—are established for convex functions on reflexive Banach spaces. In particular, Theorem 3.1 of \cite{NgZheng} shows that for convex functions, a global error bound is equivalent to the existence, for each \(x\notin S\), of a unit direction \(h_x\) with \(d^+f(x)(h_x)\le -1/\tau\) (condition (v) of Theorem 3.1 in \cite{NgZheng}). Our condition (ii) in Theorem~\ref{TTWCJK} adopts a different perspective: instead of requiring a directional derivative bound, it imposes a uniform descent condition—namely, from every \(x\in A\) one can move to some \(y\in A\) with \(\|x-y\|\le \alpha\theta(x)\) and \(\theta(y)\le \beta\theta(x)\).    Both conditions share the same underlying idea, as both require the existence of a feasible descent direction from every non-solution point.         Importantly, the proof of  (i) $\Leftrightarrow$ (ii) (Theorem~\ref{TTWCJK}) does not rely on the convexity   of \(\theta\),  it uses only the nonnegativity of \(\theta\) on \(A\) and standard metric arguments. Thus condition (ii) is applicable to general functions, in contrast to the directional derivative criterion in \cite{NgZheng}, which is specific to convex functions.  Moreover,  condition (ii) provides an equivalent characterization of the global error bound, without requiring differentiability, convexity, or explicit computation of directional derivatives.

	Klatte and Li  	\cite{KL1999}  studied asymptotic constraint qualifications for convex inequalities and showed equivalence among three conditions: bounded excess, Slater condition together with the asymptotic constraint qualification, and positivity of normal directional derivatives. Their work concerns general convex constraint systems, not general functions. Condition (v) (sublevel set Hausdorff stability) in our theorem (Theorem~\ref{XLWCJK}) is conceptually related to their bounded excess condition, while our characterization via directional derivatives in Theorem  \ref{TVhCJK} ---\(\sup_{x\in A\setminus S_0}\varphi(x)<0\)---parallels their positivity of normal directional derivatives.
	
	Ng and Yang   	\cite{NgYang2002}   studied error bounds for abstract linear inequality systems \(Ax\le b\) in Banach spaces ordered by closed convex cones. They proved that if \(C\) is polyhedral, the system always has an error bound, and characterized error bounds via angles between subspaces in Hilbert spaces. Their results are restricted to linear inequality systems, whereas our theorem applies to general functions, providing a much broader framework.

	Az\'e and Corvellec   \cite{AC2004}   characterized global and local error bounds for lower semicontinuous functions on complete metric spaces using the strong slope \(|\nabla f|\). In their Proposition~3.1, they proved that when \(f\) is convex, the strong slope admits the explicit representation
	\[
	|\nabla f|(x) = \sup_{f(z) < f(x)} \frac{f(x) - f(z)}{\|x - z\|},
	\]
	which is exactly the definition of the slope \(m(x)\) appearing in our condition (v) (Theorem~\ref{TTWCJK}). Condition (v) in our theorem---\(m(x)\ge \gamma\) with \(m(x)=\sup_{\theta(y)<\theta(x)}(\theta(x)-\theta(y))/\|x-y\|\)---is therefore a direct analogue of their strong slope condition, adapted to the   lower semicontinuous function \(\theta\) on the closed set \(A\). It is important to note, however, that while the slope formula of Az\'e and Corvellec relies on convexity, our proof of the equivalences (i) \(\Leftrightarrow\) (iii) \(\Leftrightarrow\) (v) (Theorem~\ref{TTWCJK}) uses neither convexity nor concavity of the function. In fact, these implications follow from purely metric and variational arguments (Ekeland's principle) and hold for any lower semicontinuous function on a complete metric space. Thus, our result   provides a general principle that subsumes both convex and concave cases.

	In his foundational paper, Kruger   \cite{Kruger}   develops a systematic theory of error bounds and metric subregularity for set-valued mappings between general metric or Banach spaces. The criteria are expressed in terms of various primal and subdifferential slopes, and they are primarily \emph{local} in nature. In contrast, Theorems \ref{TTWCJK} and \ref{XLWCJK} provide a comprehensive \emph{global} characterization for the general function  $\theta$. 
	Notably, conditions (iii) and (v) in Theorem \ref{TTWCJK}---the descent and slope conditions---are direct analogues of the primal slope criteria in    \cite{Kruger}, which are defined on arbitrary metric spaces. Since the proofs of these two conditions do not depend on concavity and convexity, they effectively transplant the general global error bound slope criteria to the   set \(A\) with \(S_0\) as the zero-level set.   Moreover, condition (iii) (perturbation stability) and condition (vi) (inverse sublevel-set characterization) in Theorem \ref{XLWCJK} are new global stability features, enriching the framework of    \cite{Kruger} which mainly focused on local slopes.
	
	The perturbation paper \cite{KLT2018} extends the developments in Kruger, Ngai and Th\'era      \cite{KNT2010} and characterizes the stability of local and global error bounds under data perturbations in the Banach space setting. It introduces new concepts of arbitrary, convex and linear perturbations of the function defining the constraint system, and interprets the characterizations as estimates of the ``radius of error bounds''. 	Theorem~\ref{XLWCJK} incorporates perturbation-type characterizations in conditions (iii) and (vi): (iii) asserts stability of the distance to \(S_0\) under arbitrary displacements \(e\in X\), while (vi) expresses the error bound in terms of the inverse image of the sublevel mapping \(\Phi\). These conditions are novel  and directly mirror the perturbation philosophy of  \cite{KLT2018}. However, whereas the perturbation paper focuses on general convex functions and their subdifferentials, Theorem~\ref{XLWCJK} establishes these perturbation characterizations for the general function  $\theta$, leveraging its Lipschitz continuity and compactness. Furthermore, condition (v)---the Hausdorff distance between sublevel sets and \(S_0\)---provides a quantitative stability estimate that is analogous to the ``radius of error bounds'' interpretation in the perturbation paper \cite{KLT2018}.
\end{remark}

We now turn to the relationship between local and global error bounds.

\begin{theorem} 
	\label{thm:localglobal}
	Let $A\subset X$ be a nonempty  compact  set.  Assume that $\theta$ is lower semicontinuous on $A$. 
	Then the following three statements are equivalent:
	\begin{enumerate}
		\item[(i)]  $\theta$ has a global error bound  on $A$: there exists  $\tau>0$ such that
		\[
		d(x,S_0)\le \tau\,\theta(x),\quad \forall x\in A .
		\]
		\item[(ii)]  $\theta$ has a uniform local error bound: there exist  $\tau>0$ and $\delta>0$ such that for  every $\bar x\in S_0$,
		$$	d(x, S_0)\le \tau\,\theta(x),\quad \forall x\in A\cap B(\bar x,\delta).	$$
		\item[(iii)] $\theta$ has a local error bound at each $\bar x\in S_0$.
	\end{enumerate}
\end{theorem}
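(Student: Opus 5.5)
The chain of implications will be (i) $\Rightarrow$ (ii) $\Rightarrow$ (iii) $\Rightarrow$ (i). The first two steps are immediate. If (i) holds with constant $\tau$, then the same $\tau$ together with any $\delta>0$ (say $\delta=1$) gives (ii), since the inequality already holds on all of $A$. Likewise (ii) implies (iii) trivially, because the uniform constants $\tau,\delta$ serve at every $\bar x\in S_0$.

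The substance is (iii) $\Rightarrow$ (i), and the plan is to reduce it to the implication (vi) $\Rightarrow$ (i) of Theorem~\ref{XLWCJK}(4), which is available since $A$ is compact and $\theta$ is lower semicontinuous on $A$.

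By (iii), each $\bar x\in S_0$ has constants $\tau_{\bar x}>0$ and $\delta_{\bar x}>0$ with $d(x,S_0)\le\tau_{\bar x}\theta(x)$ on $A\cap B(\bar x,\delta_{\bar x})$. First, $S_0$ is closed in $A$ because $\theta$ is lower semicontinuous on $A$, and $A$ is compact, so $S_0$ is compact. The open cover $\{B(\bar x,\delta_{\bar x})\}_{\bar x\in S_0}$ of $S_0$ therefore has a finite subcover $B(\bar x_1,\delta_1),\dots,B(\bar x_k,\delta_k)$. Set
\[
U:=\bigcup_{i=1}^k B(\bar x_i,\delta_i),\qquad \kappa:=\max_i \tau_{\bar x_i}.
\]
Then $U$ is open, $U\supseteq S_0$, and $d(x,S_0)\le\kappa\,\theta(x)$ for all $x\in U\cap A$.

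As shown in the proof of (i) $\Rightarrow$ (vi) of Theorem~\ref{XLWCJK}, we have $\Phi(0)=S_0$ and $d(0,\Phi^{-1}(x))=\theta(x)$ for $x\in A$. Hence the last estimate is exactly condition (vi) of Theorem~\ref{XLWCJK}, and part 4 of that theorem yields (i).

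If a self-contained argument is preferred, the alternative is to repeat that proof directly. Put $H:=A\setminus U$; it is compact and disjoint from $S_0$. Lower semicontinuity gives $m:=\min_H\theta>0$, and on $H$ we have $d(x,S_0)\le \operatorname{diam}(A)/m\cdot\theta(x)$. Taking $\tau=\max\{\kappa,\operatorname{diam}(A)/m\}$ then gives (i).

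The only genuinely delicate step is extracting the finite subcover, which relies on the compactness of $S_0$. That in turn depends on the lower semicontinuity of $\theta$ on the compact set $A$, so this should be stated explicitly. An edge case to mention is $H=\emptyset$: then the bound on $U\cap A=A$ already suffices.
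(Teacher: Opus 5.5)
Your proof is correct and follows essentially the paper's argument: a finite subcover of the compact set $S_0$, then a positive minimum of $\theta$ on the compact remainder combined with the bound $\operatorname{diam}(A)$. The only difference is cosmetic. The paper shrinks the radii to $\delta_i/2$ to get a uniform tube $\{x\in A: d(x,S_0)<\delta\}$, whereas you take $U=\bigcup_i B(\bar x_i,\delta_i)$ directly, which is exactly the paper's own proof of (vi) $\Rightarrow$ (i) in Theorem~\ref{XLWCJK}, so your reduction to that result is legitimate.
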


\begin{proof}
	The implications (i) $\Rightarrow$ (ii) and (ii) $\Rightarrow$ (iii) follow directly from the definitions.
	
	(iii) $\Rightarrow$ (i).
	For any $\bar x\in S_0$, since $\theta$ has a local error bound at $\bar x$, there exist  $\delta_{\bar x} >0$ and $\kappa_{\bar x}>0$ such that
	\[
	d(x,S_0)\le \kappa_{\bar x}\,\theta(x), \quad \forall x\in A\cap B(\bar x,\delta_{\bar x}).
	\]
	Since    $\theta$ is lower semicontinuous on $A$, we obtain that   $S_0$ is closed.   Because $S_0 \subseteq A$ and $A$ is compact,  we get that  $S_0$ is compact. There exist finitely many points $\bar x_1,\dots,\bar x_m\in S_0$ such that
	\[
	S_0 \subseteq\bigcup_{i=1}^{m}B(\bar x_i,\delta_i/2),\qquad \delta_i:=\delta_{\bar x_i}.
	\]
	Set $\delta:=\min_i\delta_i/2>0$ and $\kappa:=\max_i\kappa_{\bar x_i}$. Now take any $x\in A$ satisfying $d(x,  S_0)<\delta$. By the definition of distance, there exists $y\in S_0$ with $\|x-y\|<\delta$. Since $y$ belongs to some $B(\bar x_i,\delta_i/2)$, we have
	\[
	\|x-\bar x_i\|\le\|x-y\|+\|y-\bar x_i\|<\delta+\frac{\delta_i}{2}\le\delta_i,
	\]
	hence $x\in B(\bar x_i,\delta_i)$, which yields
	$	d(x,S_0)\le \kappa_{\bar x_i}\,\theta(x)\le\kappa\,\theta(x).$
	Thus,
	\begin{equation} \label{EglZS1}
		d(x,S_0)\le \kappa\,\theta(x),\qquad \forall x\in A,\ d(x,S_0)<\delta.  
	\end{equation}

	Now take any $x\in A$. We distinguish two cases.
	
	\noindent\textbf{Case 1:} $d(x,S_0)<\delta$.  From (\ref{EglZS1}) we immediately obtain
	$	d(x,S_0)\le \kappa\,\theta(x).$
	
	\noindent\textbf{Case 2:} $d(x,S_0)\ge \delta$. Set
	$	H:=\bigl\{x\in A : d(x,S_0)\ge \delta\bigr\}.$
	Since	$A$ is compact and $d(\cdot,S_0)$ is continuous, we get that $H$ is compact. Clearly $H\cap S_0 =\emptyset$ and $\theta(x)>0$ for all $x\in H$. 
	This together with the lower semicontinuity of $\theta$  implies that
	\[
	m:=\min_{x\in H}\theta(x)>0.
	\]
	On the other hand, $A$ is bounded, and its diameter $\beta:=\operatorname{diam}(A)=\sup_{u,v\in A}\|u-v\|<+\infty$. For $x\in H$, we have $d(x,S_0)\le \beta$, which together with $\theta(x)\ge m$ yields
	\[
	d(x,S_0)\le \beta \le \frac{\beta}{m}\,\theta(x).
	\]
		Combining the two cases, set
	$	\tau := \max\Bigl\{\kappa,\;\frac{\beta}{m}\Bigr\}, $
	then for every $x\in A$,
	$	d(x,S_0)\le \tau \,\theta(x).$   	This completes the proof. 
\end{proof}

 \section{Characterizing error bounds via directional derivatives and tangent cones}

In this section we take a  geometric approach: we introduce the directional derivative of \(\theta\) and the tangent cone to the feasible set \(A\), and we study how these objects can be used to formulate transparent and computationally tractable criteria for both global and local error bounds. 
The central quantity in our analysis is the minimal unit‑sphere directional derivative \(\varphi(x)\) on the tangent cone, which will be shown to be closely related to the global slope \(m(x)\) and to provide sharp estimates for the optimal error bound constant.

Recall that the directional derivative of \(\theta\) at \( x \in X \) in the direction \( v \in X \), denoted \(\theta'(x;v)\), is defined as follows:
\[
\theta'(x;v) = \lim_{t \downarrow 0} \frac{\theta (x + tv) - \theta (x)}{t}.
\]

\begin{theorem}  \label{JBXZDS} 
	Let $x \in X$.	Assume that  $\theta$ is concave and   Lipschitz continuous on $X$ with constant $L$.
	Then	the following statements hold:
	\begin{itemize}
		\item[(i)]  For   any direction $v \in X$, the directional derivative  $	\theta'(x; v)$  exists. 
		\item[(ii)] The map $v \mapsto \theta'( x; v)$ is a Lipschitz continuous   function on $X$ with the   constant $L$.
		\item[(iii)] For all $v_1, v_2 \in X$ and $\alpha \ge 0$,
		\[
		\theta'(x; v_1+v_2) \ge \theta'( x; v_1) + \theta'( x; v_2), \quad
		\theta'( x; \alpha v_1) = \alpha \,\theta'( x; v_1).
		\]
		In		particular,   the map $v \mapsto \theta'( x; v)$ is a   concave function on $X$.
	\end{itemize}
\end{theorem}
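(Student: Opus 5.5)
The plan is to reduce everything to the monotonicity of difference quotients of a concave function along a line. Fix $x,v\in X$ and set $g(t):=\theta(x+tv)$ for $t\in\mathbb{R}$; $g$ is concave and real-valued, since $\theta$ is Lipschitz and hence finite everywhere. For a concave function of one real variable, the quotient $q(t):=\frac{g(t)-g(0)}{t}$ is nonincreasing on $(0,\infty)$: for $0<s<t$, write $s=\frac{s}{t}t+(1-\frac{s}{t})\cdot 0$, so that concavity gives $g(s)\ge \frac{s}{t}g(t)+(1-\frac{s}{t})g(0)$, which rearranges to $q(s)\ge q(t)$. Lipschitz continuity gives $|q(t)|\le L\|v\|$. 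Hence $q(t)$ is nondecreasing as $t\downarrow 0$ and bounded above, so the limit exists and is finite. This proves (i), with $|\theta'(x;v)|\le L\|v\|$.

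For (ii), for fixed $t>0$ I will estimate
\[
\left|\frac{\theta(x+tv_1)-\theta(x)}{t}-\frac{\theta(x+tv_2)-\theta(x)}{t}\right| = \frac{|\theta(x+tv_1)-\theta(x+tv_2)|}{t}\le L\|v_1-v_2\|,
\]
and then let $t\downarrow0$, using (i), to obtain $|\theta'(x;v_1)-\theta'(x;v_2)|\le L\|v_1-v_2\|$.

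For (iii), positive homogeneity for $\alpha>0$ follows from the substitution $s=\alpha t$ in the defining limit; the case $\alpha=0$ is trivial because $\theta'(x;0)=0$. For superadditivity, I use concavity at the midpoint:
\[
\theta\bigl(x+\tfrac{t}{2}(v_1+v_2)\bigr)=\theta\bigl(\tfrac12(x+tv_1)+\tfrac12(x+tv_2)\bigr)\ge \tfrac12\theta(x+tv_1)+\tfrac12\theta(x+tv_2).
\]
Subtracting $\theta(x)$, dividing by $t/2$ and letting $t\downarrow0$ gives $\theta'(x;v_1+v_2)\ge\theta'(x;v_1)+\theta'(x;v_2)$. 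Concavity of $v\mapsto\theta'(x;v)$ then follows from superadditivity and homogeneity:
\[
\theta'\bigl(x;\lambda v_1+(1-\lambda)v_2\bigr)\ge \lambda\theta'(x;v_1)+(1-\lambda)\theta'(x;v_2).
\]

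The only step that needs any care is the monotonicity of the difference quotient in (i), which supplies existence. The remaining steps are routine limit passages, and they are valid precisely because (i) guarantees that the limits exist.
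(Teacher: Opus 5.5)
Your proof is correct and follows essentially the same route as the paper's: monotonicity of the difference quotient plus the Lipschitz bound for (i), the same quotient estimate for (ii), and substitution plus the midpoint concavity inequality for (iii). The only difference is cosmetic: you use the midpoint $x+\tfrac{t}{2}(v_1+v_2)$ of $x+tv_1$ and $x+tv_2$, where the paper uses $x+t(v_1+v_2)$ as the midpoint of $x+2tv_1$ and $x+2tv_2$.
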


\begin{proof}
	Since $\theta$ is concave and   Lipschitz continuous on $X$ with constant $L$,  we have 
	\begin{equation}\label{eq:lip}
		|\theta(x)-\theta(y)|\le L\|x-y\|,\quad \forall x,y\in X,
	\end{equation}
	and for any $x,y\in X$ and $\lambda\in[0,1]$,
	\begin{equation}\label{eq:concave}
		\theta\bigl((1-\lambda)x+\lambda y\bigr)\ge (1-\lambda)\theta(x)+\lambda\theta(y).
	\end{equation}
	
	(i).	Fix an arbitrary point $x\in X$ and a direction $v\in X$.
	For $t>0$,  define the difference quotient
	\[
	Q(t):=\frac{\theta(x+tv)-\theta(x)}{t}.
	\]
	
	We first show that $ Q(\cdot)$ is nonincreasing on $(0,\infty)$. For $0<t_1<t_2$, set $\lambda:=\frac{t_1}{t_2}\in(0,1)$, then
	\[
	x+t_1 v = (1-\lambda)x+\lambda(x+t_2 v).
	\]
	Using the concavity \eqref{eq:concave} of $\theta$, we have 
	$	\theta(x+t_1 v) \ge (1-\lambda)\theta(x)+\lambda\theta(x+t_2 v), $
	and so
	\[
	\theta(x+t_1 v)-\theta(x) \ge \lambda\bigl(\theta(x+t_2 v)-\theta(x)\bigr)
	= \frac{t_1}{t_2}\bigl(\theta(x+t_2 v)-\theta(x)\bigr).
	\]
	Dividing both sides by $t_1$ gives
	\[
	Q(t_1) = \frac{\theta(x+t_1 v)-\theta(x)}{t_1}
	\ge \frac{\theta(x+t_2 v)-\theta(x)}{t_2} = Q(t_2).
	\]
	
	On the other hand, by the Lipschitz condition \eqref{eq:lip}, $Q(t)\le  L\|v\|$ for all $t>0$,
	so the directional derivative 
	\[
	\theta'(x;v)  =\sup_{t  > 0}Q(t) =\lim_{t\downarrow0}Q(t) =\lim_{t\downarrow0}\frac{\theta(x+t v)-\theta(x)}{t}
	\]
	exists and is finite.

	(ii).	For any $v_1,v_2\in X$ and $t>0$, by \eqref{eq:lip},
	\[
	\begin{aligned}
		\bigl|Q_{v_1}(t)-Q_{v_2}(t)\bigr|
		&= \frac{\bigl|\theta(x+t v_1)-\theta(x+t v_2)\bigr|}{t}\\
		&\le \frac{L\|t v_1-t v_2\|}{t} = L\|v_1- v_2\|.
	\end{aligned}
	\]
	Letting $t\downarrow0$, we obtain
	\[
	\bigl|\theta'(x;v_1)-\theta'(x;v_2)\bigr| \le L\|v_1- v_2\|,\qquad \forall v_1, v_2\in X.
	\]

	(iii).	Let $\alpha\ge0$. If $\alpha=0$, then $\theta'(x;0)=\lim_{t\downarrow0}0=0$. If $\alpha>0$, make the change of variable $s=\alpha t$; as $t\downarrow0$, $s\downarrow0$, and
	
	\begin{eqnarray*}   	\theta'(x;\alpha v) &=&  \lim_{t\downarrow0}\frac{\theta(x+t\alpha v)-\theta(x)}{t}
		= \alpha \lim_{t\downarrow0}\frac{\theta(x+(\alpha t)v)-\theta(x)}{\alpha t}   \\
		&=& \alpha \lim_{s\downarrow0}\frac{\theta(x+s v)-\theta(x)}{s}
		= \alpha\,\theta'(x;v).
	\end{eqnarray*}

	For any $v_1,v_2\in X$,   it follows from  (\ref{eq:concave})  that
	\[
	\begin{aligned}
		\theta\bigl(x+t(v_1+v_2)\bigr)
		&= \theta\Bigl(\frac{1}{2}(x+2tv_1)+\frac{1}{2}(x+2tv_2)\Bigr)\\
		&\ge \frac{1}{2}\theta(x+2tv_1) + \frac{1}{2}\theta(x+2tv_2).
	\end{aligned}
	\]
	Subtract $\theta(x)$ and divide by $t>0$:
	\[
	\frac{\theta(x+t(v_1+v_2))-\theta(x)}{t}
	\ge \frac{\theta(x+2tv_1)-\theta(x)}{2t} + \frac{\theta(x+2tv_2)-\theta(x)}{2t}.
	\]
	Letting $t\downarrow0$, the two terms on the right tend to $\theta'(x;v_1)$ and $\theta'(x;v_2)$, respectively,
	while the left-hand side tends to $\theta'(x;v_1+v_2)$. This yields the superadditivity
	$$		\theta'(x;v_1+v_2) \ge \theta'(x;v_1) + \theta'(x;v_2),\qquad \forall v_1,v_2\in X.	$$
	Together with positive homogeneity, for any $\lambda\in[0,1]$,
	\[
	\theta'(x;\lambda v_1+(1-\lambda) v_2)
	\ge \theta'(x;\lambda v_1) + \theta'(x;(1-\lambda) v_2)
	= \lambda\theta'(x;v_1) + (1-\lambda)\theta'(x; v_2).
	\]
	Hence the map $d\mapsto\theta'(x;v)$ is concave.  	This completes the proof. 
\end{proof}

Analogous to Theorem  \ref{JBXZDS}, we obtain the following theorem.

\begin{theorem}  \label{JBXCOS} 
	Let $x \in X$.	Assume that  $\theta$ is convex and   Lipschitz continuous on $X$ with constant $L$.
	Then	the following statements hold:
	\begin{itemize}
		\item[(i)]  For   any direction $v \in X$, the directional derivative  $	\theta'(x; v)$  exists. 
		\item[(ii)] The map $v \mapsto \theta'( x; v)$ is a Lipschitz continuous   function on $X$ with the   constant $L$.
		\item[(iii)] For all $v_1, v_2 \in X$ and $\alpha \ge 0$,
		\[
		\theta'(x; v_1+v_2) \le \theta'( x; v_1) + \theta'( x; v_2), \quad
		\theta'( x; \alpha v_1) = \alpha \,\theta'( x; v_1).
		\]
		In		particular,   the map $v \mapsto \theta'( x; v)$ is a   convex function on $X$.
	\end{itemize}
\end{theorem}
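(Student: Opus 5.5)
The plan is to mirror the proof of Theorem~\ref{JBXZDS}, reversing every inequality because convexity replaces concavity. Alternatively, one could apply Theorem~\ref{JBXZDS} to $-\theta$, which is concave and Lipschitz with the same constant $L$, and use $(-\theta)'(x;v)=-\theta'(x;v)$. To keep the argument self-contained I would give the direct argument and note the reduction as a remark.

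\textbf{Step (i).} Fix $x,v$ and define $Q(t):=\frac{\theta(x+tv)-\theta(x)}{t}$ for $t>0$. For $0<t_1<t_2$, set $\lambda=t_1/t_2$ and write $x+t_1v=(1-\lambda)x+\lambda(x+t_2v)$. Convexity then gives $\theta(x+t_1v)-\theta(x)\le \lambda(\theta(x+t_2v)-\theta(x))$, so $Q(t_1)\le Q(t_2)$, i.e., $Q$ is nondecreasing. The Lipschitz bound gives $Q(t)\ge -L\|v\|$. A monotone function bounded below has a finite limit as $t\downarrow 0$, namely $\theta'(x;v)=\inf_{t>0}Q(t)$. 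This is the only place where the direction of monotonicity matters: here the bound must come from below rather than from above as in the concave case. I expect this step to be the main point of care, though it is not a real obstacle.

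\textbf{Step (ii).} The Lipschitz estimate for $v\mapsto\theta'(x;v)$ is identical to the concave case. For each $t>0$, $|Q_{v_1}(t)-Q_{v_2}(t)|\le L\|v_1-v_2\|$, and letting $t\downarrow0$ gives the claim.

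\textbf{Step (iii).} Positive homogeneity follows from the same change of variables $s=\alpha t$, with the trivial case $\alpha=0$ handled separately. For subadditivity, write $x+t(v_1+v_2)=\tfrac12(x+2tv_1)+\tfrac12(x+2tv_2)$ and apply convexity to get $\theta(x+t(v_1+v_2))\le\tfrac12\theta(x+2tv_1)+\tfrac12\theta(x+2tv_2)$. Subtracting $\theta(x)$, dividing by $t$, and letting $t\downarrow0$ (all three limits exist by (i)) yields $\theta'(x;v_1+v_2)\le\theta'(x;v_1)+\theta'(x;v_2)$. Finally, sublinearity gives $\theta'(x;\lambda v_1+(1-\lambda)v_2)\le\lambda\theta'(x;v_1)+(1-\lambda)\theta'(x;v_2)$ for $\lambda\in[0,1]$, which is the convexity of $v\mapsto\theta'(x;v)$.
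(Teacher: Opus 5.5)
Your proposal is correct and is the argument the paper intends: the paper gives no separate proof and only says the result follows analogously to Theorem~\ref{JBXZDS}. That is exactly what you do, reversing each inequality so that the difference quotient is nondecreasing, bounded below by $-L\|v\|$, and $\theta'(x;v)=\inf_{t>0}Q(t)$. Your alternative remark, applying Theorem~\ref{JBXZDS} to $-\theta$, is also a valid shortcut.
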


For the sake of convenience, we   always assume in the following that the directional derivative  $	\theta'(x; v)$  exists for any $x \in A$ and for any  $v\in T_A({x})$.
We now introduce an important concept. For any \(x \in A\),   the unit-sphere minimum directional derivative \(\varphi(x)\) is defined  by
\[
\varphi(x) := \inf\bigl\{ \theta'(x; v) : v \in T_A(x),\; \|v\| = 1 \bigr\}.
\]

\begin{remark}   \label{RqXEQ}   Assume that $\theta$ is   Lipschitz continuous on $X$ with constant $L$.       If $\bar{x} \in S_0$,  then $\theta'(x;v)\ge 0$ for any $v\in T_A(\bar{x})$.  In fact, it is clear that 
	$\theta(\bar{x} )=0$ and $\theta(y)\ge 0$ for all $y\in A$. 
	Take any $v\in T_A(\bar{x})$. Then there exist sequences $t_n\downarrow0$ and $v_n\to v$ such that $x_n:=\bar{x}+t_n v_n\in A$. By Lipschitz continuity of $\theta$,
	\[
	0\le\frac{\theta(x_n)}{t_n}= \frac{\theta(\bar{x}+t_n v_n)-\theta(\bar{x})}{t_n}
	\le \frac{\theta(\bar{x}+t_n v)-\theta(\bar{x})}{t_n}+  L  \,\|v_n-v\|.
	\]
	Letting $n\to\infty$, the first term on the right tends to $\theta'(x;v)$, and the second term tends to $0$. Hence $\theta'(\bar{x};v)\ge 0$.   Therefore,  we obtain that if $\bar{x} \in S_0$,  then  $\varphi(\bar{x}) \geq 0$.
\end{remark}

Next,  we discuss the relationship between the unit-sphere minimum directional derivative \(\varphi(x)\) and the global slope \(m(x)\) introduced in Section~3.

\begin{theorem}   \label{ThyvQ} 
	Let \(x\in A\setminus S_0\).	The following statements hold:
	\begin{itemize}
		\item[(i)]  If $A$ is convex and $\theta$ is convex on $A$, then \(m(x)\le -\varphi(x)\).
		\item[(ii)]  If  $\theta$ is   Lipschitz continuous on $X$ with constant $L$, then \(m(x)\ge -\varphi(x)\).
	\end{itemize}
\end{theorem}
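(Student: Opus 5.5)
Both parts come from comparing difference quotients along a direction with the slope quotients $(\theta(x)-\theta(y))/\|x-y\|$ that define $m(x)$. Part (i) fixes a competitor $y$ in the supremum defining $m(x)$ and builds a unit tangent direction from it. Part (ii) fixes a unit tangent direction and produces nearby competitors $y$ from the sequence witnessing tangency. If the set defining $m(x)$ is empty, then $m(x)=0$; we need a convention for this case, so we will assume it is nonempty or treat it separately.

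\textbf{(i).} Let $y\in A$ with $\theta(y)<\theta(x)$, so $y\neq x$. Put $v:=(y-x)/\|y-x\|$. Since $A$ is convex, $x+t(y-x)\in A$ for $t\in[0,1]$. Hence $y-x\in\bigcup_{t>0}t(A-x)\subseteq T_A(x)$, and so $v\in T_A(x)$ with $\|v\|=1$. This uses only the elementary inclusion: take $t_k\downarrow 0$ with $d_k\equiv v$, which needs no closedness of $A$.

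Convexity of $\theta$ on $A$ makes the quotient $s\mapsto(\theta(x+sv)-\theta(x))/s$ nondecreasing for $s\in(0,\|y-x\|]$. This is the same chord argument as in Theorem~\ref{JBXZDS} with the inequality reversed, and it stays inside $A$ because the segment does. Therefore
\[
\theta'(x;v)\le \frac{\theta(y)-\theta(x)}{\|y-x\|}.
\]
It follows that $\varphi(x)\le -\frac{\theta(x)-\theta(y)}{\|x-y\|}$. Taking the supremum over admissible $y$ gives $m(x)\le-\varphi(x)$. We use the standing assumption that $\theta'(x;v)$ exists for $v\in T_A(x)$.

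\textbf{(ii).} If $\varphi(x)\ge 0$, the claim is trivial since $m(x)\ge 0$. Otherwise fix any $v\in T_A(x)$ with $\|v\|=1$ and $\theta'(x;v)<0$. Choose $t_k\downarrow0$ and $v_k\to v$ with $y_k:=x+t_kv_k\in A$. By Lipschitz continuity, exactly as in Remark~\ref{RqXEQ},
\[
\frac{\theta(y_k)-\theta(x)}{t_k}\le \frac{\theta(x+t_kv)-\theta(x)}{t_k}+L\|v_k-v\|\to\theta'(x;v)<0.
\]
So for large $k$ we have $\theta(y_k)<\theta(x)$, and $y_k$ is admissible in $m(x)$. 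Moreover $\|x-y_k\|=t_k\|v_k\|$ with $\|v_k\|\to1$. Hence
\[
m(x)\ge \frac{\theta(x)-\theta(y_k)}{t_k\|v_k\|}\to-\theta'(x;v).
\]
Taking the infimum of $\theta'(x;v)$ over such $v$ yields $m(x)\ge-\varphi(x)$.

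The main delicate points are the following. In (ii), one must normalise by $\|v_k\|$ rather than $1$, which is handled by $\|v_k\|\to1$, and one must restrict to directions with negative derivative to guarantee admissibility. In (i), the monotonicity of the difference quotient must be obtained using convexity only along the segment inside $A$. Finally, the degenerate case where the set defining $m(x)$ is empty should be noted, since then (i) would require $\varphi(x)\le 0$, which need not hold; it is safest to read $m(x)=0$ there.
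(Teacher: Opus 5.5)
Your proposal is correct and follows essentially the same argument as the paper: in (i) you turn each admissible $y$ into the unit tangent direction $(y-x)/\|y-x\|$ and use monotonicity of convex difference quotients, and in (ii) you use the tangency sequence $y_k=x+t_kv_k$ with the Lipschitz estimate and the normalisation $\|v_k\|\to 1$, exactly as the paper does.

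Your caveat about an empty admissible set can be dropped. Since $\theta(x)>0$ and $S_0\neq\emptyset$, every $y\in S_0$ satisfies $\theta(y)=0<\theta(x)$, so the supremum defining $m(x)$ is always over a nonempty set. In case (i) this even forces $\varphi(x)<0$.
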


\begin{proof} 	(i).  Take any \(y\in A\) with \(\theta(y)<\theta(x)\). Set \(t=\|y-x\|>0\), \(v=(y-x)/t\).  It is clear that \(\|v\|=1\).
Take any sequence  $t_k \downarrow0$ with \( t_k \in (0,1) \).   By the convexity of \(A\),   we obtain
$	x + t_k (y - x) \in A, $	hence  $y - x \in T_A(x).$
Since \( T_A(x)\) is a cone, we have
$	v  \in  {T}_A(x).  $
Since \(\theta\) is convex, we have
\begin{equation}\label{EqpB1}
	\frac{\theta(x+tv)-\theta(x)}{t}\ge \theta'(x;v).
\end{equation}
Since \(v\in T_A(x)\), \(\|v\|=1\)  and  $y= x+tv$, by the definition of \(\varphi\) and (\ref{EqpB1}),
\[
\frac{\theta(x)-\theta(y)}{\|x-y\|}\le -\theta'(x;v)\le \sup_{\substack{v\in T_A(x)\\\|v\|=1}}(-\theta'(x;v))
=-\inf_{\substack{v\in T_A(x)\\\|v\|=1}}\theta'(x;v)=-\varphi(x).
\]
Taking the supremum over all \(y\in A\) with \(\theta(y)<\theta(x)\) yields \(m(x)\le -\varphi(x)\).

(ii).    Take any \(v\in T_A(x)\) with \(\|v\|=1\). 
Then there exist sequences $t_n  \downarrow 0$ and $v_n \to v$ such that $y_n :=  x + t_n v_n \in A$.
Since  $\theta$ is   Lipschitz continuous on $X$ with constant $L$,  we have 
\[
|\theta(y_n)-\theta(x+t_n v)|  =    |\theta(x+t_n v_n)-\theta(x+t_n v)|\le Lt_n\|v_n-v\|.
\]
Hence,
\begin{equation}\label{EqpB2}
	\frac{\theta(y_n)-\theta(x)}{t_n}
	\leq    \frac{\theta(x+t_n v)-\theta(x)}{t_n}+ L \|v_n-v\|, 
\end{equation}
and so 
\begin{equation}\label{EqpB3}  
	\limsup_{n \to\infty}   \frac{\theta(y_n)-\theta(x)}{t_n}   \leq  \theta'(x;v). 
\end{equation}

If \(\theta'(x;v)<0\), then  it follows from  (\ref{EqpB3})  that for sufficiently large \(n\) we have \(\theta(y_n)<\theta(x)\). This together with  (\ref{EqpB2})    implies that 

\begin{eqnarray*}
	m(x) &\ge& \frac{\theta(x)-\theta(y_n)}{\|y_n-x\|}
	\ge \frac{1}{\|v_n\|} \cdot  \frac{\theta(x)-   \theta(y_n)}{t_n} \\
	&\ge&   \frac{1}{\|v_n\|} \cdot \left( \frac{\theta(x)  -  \theta(x+t_n v)}{t_n}   - L \|v_n-v\| \right) .
\end{eqnarray*}
Letting $n\to\infty$, 
by $   \|v_n \|  \to  \|v\|=1$, we get   $m(x)\ge  -\theta'(x;d).$

If \(\theta'(x;d)\ge0\), then \(  m(x)  \ge 0  \ge -\theta'(x;d) \).

Taking the supremum over all \(d\in T_A(x)\) with \(\|d\|=1\) gives 
$$    m(x)\ge  \sup_{\substack{v\in T_A(x)\\\|v\|=1}}(-\theta'(x;v))
=-\inf_{\substack{v\in T_A(x)\\\|v\|=1}}\theta'(x;v)=-\varphi(x).$$
This completes the proof.    \end{proof}

\begin{theorem}  \label{TVhCJK}  Let $X$   be a Banach space  and $A \subseteq X$ be a nonempty closed   set. Assume that $\theta$ is lower semicontinuous on $A$. Then the following statements hold:
	\begin{itemize}
		\item[(i)]   If  $\theta$ is   Lipschitz continuous on $X$ with constant $L$ and 	$ \sup_{x \in A \setminus S_0} \varphi(x) < 0 $, then $\theta$ has an error bound on $A$.
		\item[(ii)] 	If $A$ is convex,  $\theta$ is convex on $A$ and $\theta$ has an error bound on $A$, then $ \sup_{x \in A \setminus S_0} \varphi(x) < 0 $.
	\end{itemize}
\end{theorem}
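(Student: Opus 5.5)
Both parts follow by combining the comparison between $\varphi$ and the global slope $m$ in Theorem~\ref{ThyvQ} with the slope characterization (v) of the error bound in Theorem~\ref{TTWCJK}. The hypotheses here ($X$ Banach, $A$ closed, $\theta$ lower semicontinuous on $A$) are exactly those of Theorem~\ref{TTWCJK}, so its equivalences are available in both directions.

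For (i), I will set $\gamma:=-\sup_{x\in A\setminus S_0}\varphi(x)>0$, so that $\varphi(x)\le-\gamma$ for every $x\in A\setminus S_0$. Since $\theta$ is Lipschitz on $X$, Theorem~\ref{ThyvQ}(ii) gives $m(x)\ge-\varphi(x)\ge\gamma$ for all $x\in A\setminus S_0$. This is statement (v) of Theorem~\ref{TTWCJK}, and the implication (v) $\Rightarrow$ (i) there gives the error bound. By tracing the constants ((v) $\Rightarrow$ (iii) with $c=\gamma/2$, then (iii) $\Rightarrow$ (i)), one obtains the explicit constant $\tau=2/\gamma$.

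For (ii), I will argue in the reverse direction. Suppose $d(x,S_0)\le\tau\theta(x)$ on $A$. The implication (i) $\Rightarrow$ (v) of Theorem~\ref{TTWCJK} gives $m(x)\ge1/\tau$ for all $x\in A\setminus S_0$; that part of the proof uses only the definitions. Since $A$ is convex and $\theta$ is convex on $A$, Theorem~\ref{ThyvQ}(i) yields $m(x)\le-\varphi(x)$. Hence $\varphi(x)\le -1/\tau$ for every $x\in A\setminus S_0$, and so $\sup_{x\in A\setminus S_0}\varphi(x)\le-1/\tau<0$. The value $\varphi(x)=-\infty$ causes no difficulty, because only an upper bound on $\varphi$ is needed.

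I do not expect a genuine obstacle, since the work was done in Theorems~\ref{TTWCJK} and~\ref{ThyvQ}. The only point to check is that Theorem~\ref{ThyvQ}(i) applies as stated: it relies on the standing assumption that $\theta'(x;v)$ exists for $v\in T_A(x)$, and on the convexity inequality $\frac{\theta(x+tv)-\theta(x)}{t}\ge\theta'(x;v)$ along segments lying in $A$. Both hold here because $A$ is convex and $\theta$ is convex on $A$, so the difference quotient is monotone along such segments.
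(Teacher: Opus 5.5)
Your proposal is correct and is exactly the paper's argument: part (i) combines Theorem~\ref{ThyvQ}(ii) with the implication (v)$\Rightarrow$(i) of Theorem~\ref{TTWCJK}, and part (ii) combines (i)$\Rightarrow$(v) of Theorem~\ref{TTWCJK} with Theorem~\ref{ThyvQ}(i); your explicit constants $\tau=2/\gamma$ and $\sup_{x\in A\setminus S_0}\varphi(x)\le -1/\tau$ are a harmless refinement. One small nuance is that convexity of $\theta$ on $A$ only guarantees that $\theta'(x;v)$ exists along radial directions $v=(y-x)/\|y-x\|$ with $y\in A$, which is all the argument uses, whereas existence on the rest of $T_A(x)$ is still the paper's standing assumption rather than a consequence of convexity.
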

\begin{proof}    (i).      For  any   \(x\in A\setminus S_0\),    it follows from  Theorem    \ref{ThyvQ}  (ii)  that  \(m(x)\ge -\varphi(x)\).   This together with  $ \sup_{x \in A \setminus S_0} \varphi(x) < 0 $  implies that 
	$$    \inf_{x \in A \setminus S_0} m (x)   \geq  \inf_{x \in A \setminus S_0}    (- \varphi(x)  )      =     -  \sup_{x \in A \setminus S_0} \varphi(x) > 0  .$$
	By Theorem    \ref{TTWCJK},  we obtain that  $\theta$ has an error bound on $A$.
	
	(ii).    Take any \(x\in A\setminus S_0\).   Due to Theorem    \ref{ThyvQ}  (i), we have  \(m(x)\le -\varphi(x)\).   We conclude from  Theorem    \ref{TTWCJK}  that  $\inf_{x \in A \setminus S_0} m (x) >0$,  and so
	$$
	\sup_{x \in A \setminus S_0} \varphi(x) = -  \inf_{x \in A \setminus S_0}    (- \varphi(x)  )  \leq -  \inf_{x \in A \setminus S_0}  m (x)  < 0.
	$$   This completes the proof. 
\end{proof}

\begin{theorem} \label{TRgfd} 
	Let $X$ be a  finite-dimensional   space and  $\bar{x} \in S_0$.    Assume that $\theta$ is   Lipschitz continuous on $X$ with constant $L$.    Then  $	\varphi (\bar x) > 0$ if and only if  	$\theta$ has a local sharp minimum at   $\bar x\in S_0$,	i.e., there exist constants $\kappa > 0$ and $\delta > 0$ such that
	\[
	\theta(x) \ge \kappa \|x - \bar x\| \qquad \forall x \in A \cap B(\bar x, \delta).
	\]
	Moreover, letting $\beta = \sup \left\{ \kappa > 0 : \exists \delta > 0,\ \theta(x) \ge \kappa \|x - \bar x\|,\ \forall x \in A \cap B(\bar x, \delta) \right\}$, we have $\beta = \varphi (\bar x)$.
\end{theorem}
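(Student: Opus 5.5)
The plan is to prove two inequalities between $\varphi(\bar x)$ and the sharpness constants. First, every admissible $\kappa$ in the definition of $\beta$ satisfies $\kappa\le\varphi(\bar x)$. Second, every $\kappa$ with $0<\kappa<\varphi(\bar x)$ is admissible. Together these give $\beta=\varphi(\bar x)$ and the equivalence. Remark~\ref{RqXEQ} already gives $\varphi(\bar x)\ge 0$. The standing assumption that $\theta'(\bar x;v)$ exists for $v\in T_A(\bar x)$ will be used throughout, and $\theta(\bar x)=0$ since $\bar x\in S_0$.

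\emph{Step 1: sharp minimum implies $\varphi(\bar x)\ge\kappa$.} Suppose $\theta(x)\ge\kappa\|x-\bar x\|$ on $A\cap B(\bar x,\delta)$. Take $v\in T_A(\bar x)$ with $\|v\|=1$, and choose $t_n\downarrow 0$, $v_n\to v$ with $\bar x+t_nv_n\in A$. For large $n$ this point lies in $B(\bar x,\delta)$, so $\theta(\bar x+t_nv_n)\ge\kappa t_n\|v_n\|$. The Lipschitz estimate, exactly as in Remark~\ref{RqXEQ} and the proof of Theorem~\ref{ThyvQ}(ii), gives
\[
\frac{\theta(\bar x+t_nv)-\theta(\bar x)}{t_n}\ge \kappa\|v_n\|-L\|v_n-v\|.
\]
Letting $n\to\infty$ yields $\theta'(\bar x;v)\ge\kappa$. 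Taking the infimum over $v$ gives $\varphi(\bar x)\ge\kappa$, and then the supremum over admissible $\kappa$ gives $\varphi(\bar x)\ge\beta$. In particular, a local sharp minimum forces $\varphi(\bar x)>0$.

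\emph{Step 2: $\varphi(\bar x)>0$ implies a sharp minimum with every $\kappa\in(0,\varphi(\bar x))$.} Argue by contradiction. If no $\delta$ works for such a $\kappa$, there are $x_n\in A$ with $x_n\to\bar x$ and $\theta(x_n)<\kappa\|x_n-\bar x\|$; this forces $x_n\ne\bar x$. Put $t_n=\|x_n-\bar x\|\downarrow 0$ and $v_n=(x_n-\bar x)/t_n$.

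This is the step where finite dimensionality is essential and which I expect to be the main point. The unit sphere of $X$ is compact, so after passing to a subsequence $v_n\to v$ with $\|v\|=1$. By definition $v\in T_A(\bar x)$, because $\bar x+t_nv_n=x_n\in A$. The Lipschitz estimate now gives
\[
\frac{\theta(\bar x+t_nv)}{t_n}\le\frac{\theta(x_n)}{t_n}+L\|v_n-v\|<\kappa+L\|v_n-v\|.
\]
Hence $\theta'(\bar x;v)\le\kappa<\varphi(\bar x)$, which contradicts the definition of $\varphi(\bar x)$. Therefore each such $\kappa$ is admissible, and $\beta\ge\varphi(\bar x)$.

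Combining the two steps gives the equivalence and $\beta=\varphi(\bar x)$. The degenerate case $T_A(\bar x)=\{0\}$ needs a short separate remark. There $\varphi(\bar x)=+\infty$ by the convention $\inf\emptyset=+\infty$. The argument of Step~2 then shows $\bar x$ is isolated in $A$: otherwise points $x_n\in A\setminus\{\bar x\}$ with $x_n\to\bar x$ would, by the same compactness, produce a unit vector in $T_A(\bar x)$. So every $\kappa$ is admissible and $\beta=+\infty=\varphi(\bar x)$ as well. Apart from this case, $\varphi(\bar x)\le L$ by the Lipschitz bound $|\theta'(\bar x;v)|\le L\|v\|$, so all quantities are finite.
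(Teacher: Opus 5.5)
Your proof is correct and follows essentially the same route as the paper. Your Step~1 is the paper's ``$\Leftarrow$'' argument (pass the sharpness inequality along a tangent sequence with the Lipschitz estimate to get $\theta'(\bar x;v)\ge\kappa$), and your Step~2 is its ``$\Rightarrow$'' contradiction argument via compactness of the unit sphere, with your $\kappa$ playing the role of $\varphi(\bar x)-\epsilon$; your separate treatment of the degenerate case $T_A(\bar x)=\{0\}$, where $\varphi(\bar x)=+\infty$ and $\bar x$ is isolated in $A$, is a small addition the paper does not make.
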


\begin{proof}    $\Rightarrow$.   Assume $\varphi (\bar x) > 0$.   
	Fix any $\epsilon >0$ with $\epsilon <	\varphi (\bar x)$. We prove that there exists $\delta > 0$ such that
	\begin{equation}\label{DvfB1}
		\theta(x) \ge \left( 	\varphi (\bar x)-\epsilon \right) \|x - \bar x\|, \quad \forall x \in A \cap B(\bar x, \delta).
	\end{equation}

	If (\ref{DvfB1}) is not true, then for each positive integer $m$ there exists $x_m \in A$ with
	\[
	0 < \|x_m - \bar x\| < \frac{1}{m} \quad\text{and}\quad \theta(x_m) < \left( 	\varphi (\bar x) - \epsilon \right) \|x_m - \bar x\|.
	\]
	Set $t_m := \|x_m - \bar x\|$ and $v_m := (x_m - \bar x)/t_m$,  then    $t_m   \to 0  $,     $\|v_m\| = 1$ and
	\begin{equation}\label{DvfB2}
		\frac{\theta(x_m)}{t_m} < 	\varphi (\bar x) -\epsilon , \quad \forall m    \in \mathbb{N}.  
	\end{equation}
	
	Since $X$ is finite-dimensional, the unit sphere is compact, so there exists a convergent subsequence (still denoted $v_m$) with $v_m \to v$ and $\|v\| = 1$. By definition of the tangent cone (as $x_m = \bar x + t_m v_m \in A$, $t_m \to 0$),     we have      $v \in T_A(\bar x)$.
	
	Using the Lipschitz continuity of $\theta$,  we get
	\[
	\theta(x_m) = \theta(\bar x + t_m v_m) \ge \theta(\bar x + t_m v) - L t_m \|v_m - v\|.
	\]
	Dividing by $t_m$,
	\[
	\frac{\theta(x_m)}{t_m} \ge \frac{\theta(\bar x + t_m v)}{t_m} - L \|v_m - v\|.
	\]
	Let $m \to \infty$, the first term on the right tends to $\theta'(\bar x; v)$, and the second term tends to $0$. Taking the limit inferior yields
	\[
	\liminf_{m \to \infty} \frac{\theta(x_m)}{t_m} \ge \theta'(\bar x; v).
	\]
	This together with  (\ref{DvfB2})  implies that 
	\begin{equation}\label{DvfB3}
		\theta'(\bar x; v) \le 	\varphi (\bar x) -\epsilon. 
	\end{equation}
	By the definition of $\varphi (\bar x)$,  we have $\theta'(\bar x; v) \ge 	\varphi (\bar x)$,  which contradicts (\ref{DvfB3}). 
	Hence (\ref{DvfB1}) holds.
	From (\ref{DvfB1}) we have $\beta \ge 	\varphi (\bar x) -\epsilon$. Since $\epsilon>0$ is arbitrary, $\beta \ge 	\varphi (\bar x)$.

	$ \Leftarrow $. 
	Assume that there exist $\kappa > 0$   and  $\delta > 0$ such that
	\begin{equation}\label{DvfB4}
		\theta(x) \ge \kappa \|x - \bar x\|,  \quad  \forall x \in A \cap B(\bar x, \delta).
	\end{equation}
	Take any nonzero direction $v \in T_A(\bar x)$.  
	Then there exist sequences $t_k \downarrow 0$ and $v_k \to v$ such that $x_k := \bar x + t_k v_k \in A$. For sufficiently large $k$, $\|x_k - \bar x\| \le t_k \|v_k\| < \delta$, so  it follows from  (\ref{DvfB4})  that
	\begin{equation}\label{DvfB5}
		\theta(x_k) \ge \kappa \|x_k - \bar x\| = \kappa t_k \|v_k\|.  
	\end{equation}
	By the Lipschitz continuity of $\theta$,  we have
	\[
	|\theta(x_k) - \theta(\bar x + t_k v)| \le L \|t_k v_k - t_k v\| = L t_k \|v_k - v\|,
	\]
	and so
	$ 	\theta(\bar x + t_k v) \ge \theta(x_k) - L t_k \|v_k - v\|. $
	This together with   (\ref{DvfB5})  implies that 
	\[
	\frac{\theta(\bar x + t_k v)}{t_k} \ge \kappa \|v_k\| - L \|v_k - v\|.
	\]
	Let  $k \to \infty$,  the right-hand side tends to $\kappa \|v\|$ (because $\|v_k\| \to \|v\|$, $\|v_k - v\| \to 0$). The limit of the left-hand side is exactly the directional derivative $\theta'(\bar x; v)$. Hence
	\[
	\theta'(\bar x; v) \ge \kappa \|v\|,   \quad  \forall v \in T_A(\bar x).
	\]
	In particular,  $\theta'(\bar x; v) \ge \kappa$ for any  $v \in S$, where $S = \{v \in T_A(\bar x) : \|v\| = 1\}$. Taking the infimum gives
	\[
	\varphi (\bar x) = \inf_{v \in S} \theta'(\bar x; v) \ge \kappa > 0.
	\]
	By definition of $\beta$, we also have $	\varphi (\bar x) \ge \beta$. Hence $\beta = 	\varphi (\bar x)$.  This completes the proof. 
\end{proof}

Let  $\bar{x} \in S_0$.  The optimal local error bound constant is defined by 
\[
\tau^*(\bar{x}) := \inf\{ \tau > 0 : \exists \delta > 0,\; \forall x \in A \cap B(\bar{x}, \delta),\; d(x, S_0) \le \tau \theta(x) \},
\]
(with the convention $\inf \emptyset = +\infty$).

\begin{theorem}  \label{TYXWs}    Let $X$ be  a  finite-dimensional   space and  $\bar{x} \in S_0$.  Assume that $\theta$ is   Lipschitz continuous on $X$ with constant $L$.   
	Then the following hold:
	\begin{enumerate}
		\item[(i)]   If $	\varphi(\bar{x}) > 0$, then $\theta$ has a  local error bound at $\bar x$ and
		\[
		\tau^*(\bar{x}) = \frac{1}{	\varphi(\bar{x})} < +\infty.
		\]
		\item[(ii)] 	If $	\varphi(\bar{x}) = 0$ and there exists a direction $v_0\in T_A(\bar{x})\setminus T_{S_0}(\bar{x})$ with $\theta'(\bar{x};v_0)=0$, then $\theta$ does \emph{not} admit a local error bound at $\bar{x}$, i.e., $\tau^*(\bar{x}) = +\infty$.
	\end{enumerate}
\end{theorem}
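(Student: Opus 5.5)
We split the statement into its two parts. In both, the main tools are Theorem~\ref{TRgfd}, which identifies $\varphi(\bar x)$ with the sharp-minimum modulus $\beta$, and a tangent-cone argument via Lipschitz continuity as in Remark~\ref{RqXEQ}.

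\emph{Part (i), upper bound $\tau^*(\bar x)\le 1/\varphi(\bar x)$.} Assume $\varphi(\bar x)>0$ and fix $0<\epsilon<\varphi(\bar x)$. By the proof of Theorem~\ref{TRgfd} (inequality (\ref{DvfB1})) there is $\delta>0$ with $\theta(x)\ge(\varphi(\bar x)-\epsilon)\|x-\bar x\|$ for all $x\in A\cap B(\bar x,\delta)$. Since $\bar x\in S_0$, we have $d(x,S_0)\le\|x-\bar x\|\le \theta(x)/(\varphi(\bar x)-\epsilon)$ on that ball. This is a local error bound, and it gives $\tau^*(\bar x)\le 1/(\varphi(\bar x)-\epsilon)$. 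Letting $\epsilon\downarrow0$ yields $\tau^*(\bar x)\le 1/\varphi(\bar x)$.

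\emph{Part (i), lower bound $\tau^*(\bar x)\ge 1/\varphi(\bar x)$.} The sharp minimum makes $\bar x$ isolated in $S_0$: $\theta(x)>0$ for $x\in A\cap B(\bar x,\delta)$ with $x\neq\bar x$. Hence every $s\in S_0\setminus\{\bar x\}$ satisfies $\|s-\bar x\|\ge\delta$. For $x\in B(\bar x,\delta/2)$ this gives $\|x-s\|>\delta/2>\|x-\bar x\|$, so $d(x,S_0)=\|x-\bar x\|$. Now suppose $d(x,S_0)\le\tau\theta(x)$ on $A\cap B(\bar x,\delta')$. Then $\theta(x)\ge\tau^{-1}\|x-\bar x\|$ on $A\cap B(\bar x,\min\{\delta',\delta/2\})$. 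By the definition of $\beta$ and Theorem~\ref{TRgfd}, $\tau^{-1}\le\beta=\varphi(\bar x)$, that is, $\tau\ge1/\varphi(\bar x)$. Taking the infimum over admissible $\tau$ gives equality.

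\emph{Part (ii).} Since $0\in T_{S_0}(\bar x)$, we have $v_0\ne0$, and by positive homogeneity of cones and of $\theta'(\bar x;\cdot)$ we may assume $\|v_0\|=1$. Pick $t_k\downarrow0$ and $v_k\to v_0$ with $x_k:=\bar x+t_kv_k\in A$. Lipschitz continuity gives $|\theta(x_k)-\theta(\bar x+t_kv_0)|\le Lt_k\|v_k-v_0\|$, so $\theta(x_k)/t_k\to\theta'(\bar x;v_0)=0$.

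The key step, and the main obstacle, is to show $\liminf_k d(x_k,S_0)/t_k>0$. Suppose not. Then along a subsequence there are $s_k\in S_0$ with $\|x_k-s_k\|/t_k\to0$. Writing $s_k=\bar x+t_kw_k$ with $w_k:=v_k+(s_k-x_k)/t_k\to v_0$ shows $v_0\in T_{S_0}(\bar x)$, a contradiction. Hence there is $c>0$ with $d(x_k,S_0)\ge ct_k$ for large $k$.

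Now suppose a local error bound with constant $\tau$ held. Since $x_k\to\bar x$, for large $k$ it would give $ct_k\le d(x_k,S_0)\le\tau\theta(x_k)$, i.e.\ $c\le\tau\,\theta(x_k)/t_k\to0$, which is impossible. This argument also covers the case $\theta(x_k)=0$. Therefore no local error bound exists at $\bar x$, and $\tau^*(\bar x)=+\infty$.
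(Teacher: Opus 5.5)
Your proposal is correct and follows essentially the same route as the paper: the upper bound comes from (\ref{DvfB1}), the lower bound from isolation of $\bar x$ in $S_0$ (so that $d(x,S_0)=\|x-\bar x\|$ near $\bar x$), and part (ii) from playing $\theta(x_k)/t_k\to 0$ against $d(x_k,S_0)\ge c\,t_k$. The differences are minor and favour your version: you cite $\beta=\varphi(\bar x)$ from Theorem~\ref{TRgfd} rather than re-deriving it with a direction that attains $\varphi(\bar x)$, and you prove the linear lower bound on $d(x_k,S_0)$ from $v_0\notin T_{S_0}(\bar x)$, whereas the paper only asserts it in (\ref{EmkiX1}).
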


\begin{proof}   (i).  Assume that $	\varphi(\bar{x}) > 0$.    Fix $\eta \in (0, 	\varphi(\bar{x}))$.  It follows from  (\ref{DvfB1})  that    there exists $\delta_0 > 0$ such that
	\begin{equation}\label{Eyhub1}
		\theta(x) \ge (\varphi(\bar{x}) - \eta) \|x - \bar{x}\|,  \quad	\forall x \in A \cap B(\bar{x}, \delta_0). 
	\end{equation}
	Due to $\bar{x} \in S_0$ and (\ref{Eyhub1}), we have 
	$$ \theta(x) \ge (\varphi(\bar{x}) - \eta) \|x - \bar{x}\|  \ge  (\varphi(\bar{x}) - \eta) d(x, S_0) ,  \quad	\forall x \in A \cap B(\bar{x}, \delta_0), $$
	and so 
	$$ \frac{1}{	\varphi(\bar{x}) - \eta} \, \theta(x)    \ge   d(x, S_0) ,  \quad	\forall x \in A \cap B(\bar{x}, \delta_0). $$
	This yields  that $\theta$ has a  local error bound at $\bar x$ and 
	\[
	\tau^*(\bar{x}) \le \frac{1}{	\varphi(\bar{x}) - \eta}.
	\]
	As $\eta \in (0, 	\varphi(\bar{x}))$ is arbitrary, letting $\eta \downarrow 0$ gives
	\begin{equation}\label{Eyhub2}
		\tau^*(\bar{x}) \le \frac{1}{	\varphi(\bar{x})}.  
	\end{equation}
	
	We show that 
	\begin{equation}\label{Eyhub3}
		S_0 \cap B(\bar x, \delta_0) = \{\bar x\}.
	\end{equation}
	Indeed, if there exists $x' \in S_0 \cap B(\bar x, \delta_0)$ with $x' \neq \bar x$, then by  (\ref{Eyhub1}),
	\[
	0 = \theta(x') \ge  (	\varphi(\bar{x}) - \eta) \|x' - \bar x\| > 0,
	\]
	which is  a contradiction. Therefore,  (\ref{Eyhub3}) holds.
	
	Set $\delta' = \delta_0 /2$. Take any $x \in B(\bar{x}, \delta')$. For any $y \in S_0 \setminus \{\bar{x}\}$, we conclude from  (\ref{Eyhub3})  that $\|y - \bar{x}\| \ge \delta_0$,  and  so
	\[
	\|x - y\| \ge \|y - \bar{x}\| - \|x - \bar{x}\| \ge \delta_0 - \delta' = \delta' > \|x - \bar{x}\|.
	\]
	This means that
	\begin{equation}\label{Eyhub4}
		d(x, S_0) = \|x - \bar{x}\|,   \quad	\forall  x \in A \cap B(\bar{x}, \delta').
	\end{equation}
	
	Suppose $\tau^*(\bar{x}) < 1/	\varphi(\bar{x})$,  then there exists $\tau'$ with $\tau^*(\bar{x}) \le \tau' < 1/	\varphi(\bar{x})$ and $\delta'' > 0$ such that
	\begin{equation}\label{Eyhub5}
		d(x, S_0) \le \tau' \theta(x),   \quad  \forall x \in A \cap B(\bar{x}, \delta'').
	\end{equation}
	Take $\delta = \min\{\delta', \delta ''\}$.  
	It follows from  (\ref{Eyhub4}) and (\ref{Eyhub5})   that
	\begin{equation}\label{Eyhub6}
		\theta(x) \ge \frac{1}{\tau'} \|x - \bar{x}\|,   \quad	\forall  x \in A \cap B(\bar{x}, \delta).
	\end{equation}
	Since $X$ is finite-dimensional, we obtain that  $S := \{v \in T_A(\bar{x}) : \|v\| = 1\}$ is compact. By the Lipschitz continuity of $\theta$,  one can easily verify that   the map $v \mapsto \theta'( x; v)$ is   continuous.  Then there exists  $v_0 \in S$  such that $\theta'(\bar{x}; v_0) = 	\varphi(\bar{x})$.  Due to $v_0 \in  T_A(\bar{x}) $, 
	there exist sequences $t_k \downarrow 0$, $v_k \to v_0$ such that $x_k = \bar{x} + t_k v_k \in A$. For sufficiently large $k$, $x_k \in B(\bar{x}, \delta)$, and applying (\ref{Eyhub6})  yields $\theta(x_k) \ge \frac{1}{\tau'} \|x_k - \bar{x}\| = \frac{1}{\tau'} t_k \|v_k\|$, and so
	\[
	\frac{\theta(x_k)}{t_k} \ge \frac{\|v_k\|}{\tau'}.
	\]
	Since $\|v_k\| \to 1$, taking the limit inferior gives 
	\begin{equation}\label{Eyhub7}
		\liminf_{k \to \infty} \frac{\theta(x_k)}{t_k} \ge \frac{1}{\tau'}.
	\end{equation}
	On the other hand, by the Lipschitz continuity of $\theta$,  we have
	\[
	\frac{\theta(x_k)}{t_k} \le \frac{\theta(\bar{x} + t_k v_0)}{t_k} + L  \|v_k - v_0\|,
	\]
	and taking the limit superior, using the definition of directional derivative and $\|v_k - v_0\| \to 0$, gives
	\[
	\limsup_{k \to \infty} \frac{\theta(x_k)}{t_k} \le \theta'(\bar{x}; v_0) = 	\varphi(\bar{x}).
	\]
	Combining this with  (\ref{Eyhub7}), we get
	\[
	\frac{1}{\tau'} \le 	\varphi(\bar{x}) \quad \Longrightarrow \quad \tau' \ge \frac{1}{	\varphi(\bar{x})},
	\]
	contradicting $\tau' < 1/	\varphi(\bar{x})$.  Therefore, we must have $\tau^*(\bar{x}) \ge 1/	\varphi(\bar{x})$. Together with (\ref{Eyhub2}) we obtain $\tau^*(\bar{x}) = 1/	\varphi(\bar{x})$.

	(ii).    Suppose that $	\varphi(\bar{x}) = 0$ and there exists a direction $v_0\in T_A(\bar{x})\setminus T_{S_0}(\bar{x})$ with $\theta'(\bar{x};v_0)=0$. 
	Since $v_0\notin T_{S_0}(\bar{x})$,    there exist  $\alpha>0$ and $t_0>0$ such that
	\begin{equation}\label{EmkiX1}
		d(\bar{x}+t v_0,\,S_0) \ge \alpha t,    \quad   \forall t\in(0,t_0).
	\end{equation}
	It follows from $v_0\in T_A(\bar{x})$ that    there exist sequences $\{t_k\}\downarrow 0$ and $\{v_k\}\to v_0$ such that    $	x_k := \bar{x}+t_k v_k \in A $ for any $k\in\mathbb{N}$.
	In view of the Lipschitz continuity of $\theta$,  we have
	\[
	\theta(x_k) = \theta(\bar{x}+t_k v_k) \le \theta(\bar{x}+t_k v_0) + L t_k \|v_k - v_0\|,
	\]
	and so
	\[
	\frac{\theta(x_k)}{t_k} \le \frac{\theta(\bar{x}+t_k v_0)}{t_k} + L \|v_k - v_0\|.
	\]
	Let $k\to\infty$.  By   $\theta'(\bar{x};v_0)=0$ and $v_k\to v_0$, we get
	\[
	\limsup_{k\to\infty} \frac{\theta(x_k)}{t_k} \le 0 + 0 = 0.
	\]
	This together with $\theta(x_k) \ge 0$   implies that
	\begin{equation}\label{EmkiX2}
		\lim_{k\to\infty} \frac{\theta(x_k)}{t_k} = 0.
	\end{equation}
	It is clear that
	\begin{equation}\label{EmkiX3}
		d(x_k, S_0) \ge d(\bar{x}+t_k v_0, S_0) - \|x_k - (\bar{x}+t_k v_0)\| = d(\bar{x}+t_k v_0, S_0) - t_k \|v_k - v_0\|.
	\end{equation}
	Take $k$ large enough so that $t_k < t_0$ and $\|v_k - v_0\| \le \alpha/2$.    It follows from  (\ref{EmkiX1})  that  $d(\bar{x}+t_k v_0, S_0) \ge \alpha t_k$.  Combining this with  (\ref{EmkiX3}), we get
	\begin{equation}\label{EmkiX4}
		d(x_k, S_0) \ge \alpha t_k - t_k\frac{\alpha}{2} = \frac{\alpha}{2} t_k > 0.  
	\end{equation}
	
	Suppose that $\theta$ has a  local error bound at $\bar x$.
	Then  there exist $\tau>0$ and $\bar{\delta } >0$ such that
	\begin{equation}\label{EmkiX5}
		d(x,S_0)\le \tau\theta(x),   \quad  \forall x\in A\cap B(\bar{x},\bar{\delta }).
	\end{equation}
	Choose $k$ large enough so that $x_k\in B(\bar{x},\bar{\delta })$.   We conclude from  (\ref{EmkiX4})  and  (\ref{EmkiX5})  that
	\[
	\frac{\alpha}{2} t_k \le d(x_k, S_0) \le \tau \theta(x_k) .
	\]
	This together with    (\ref{EmkiX2})    yields   that 
	$$ 0 < {\alpha  \over 2 \tau} \le {{   \theta \left( {{x_k}} \right)} \over {{t_k}}} \to 0,$$
	which leads to a contradiction.  Therefore, $\theta$ does not have a local error bound at $\bar{x}$, i.e., $\tau^*(\bar{x}) = +\infty$.  This completes the proof. 
\end{proof}

Theorem  \ref{TYXWs} (ii) shows: if $	\varphi(\bar{x})=0$ and, additionally, there exists a direction $v_0\in T_A(\bar{x})\setminus T_{S_0}(\bar{x})$ with $\theta'(\bar{x};v_0)=0$, then a local error bound certainly fails.   The following counterexample demonstrates that the additional condition is essential.

\begin{example}
	Let $X=\mathbb{R}^2$ be equipped with the Euclidean norm.
	Set $A=[0,1]^2$ and define $\theta:X\to\mathbb{R}$ by
	\[
	\theta(x)=\min\{x_1,\,x_2\}, \quad  \forall x = (x_1,x_2)  \in \mathbb{R}^2.
	\]
	Then $\theta$ is concave and Lipschitz continuous on $X$, and on $A$ we have $\theta\ge0$.
	The zero set is
	\[
	S_0=\{x\in A:\theta(x)=0\}
	=\{(x_1,x_2)\in[0,1]^2 : x_1=0\ \text{or}\ x_2=0\}.
	\]
	Take $\bar{x}=(0,0)\in S_0$.
	Clearly $T_A(\bar{x})=\mathbb{R}^2_+$.
	For any direction $v=(v_1,v_2)\in T_A(\bar{x})$,
	\[
	\theta(\bar{x}+tv)=\theta(tv_1,tv_2)=t\min\{v_1,v_2\},
	\]
	hence $\theta'(\bar{x};v)=\min\{v_1,v_2\}$.
	Consequently,
	\[
	\varphi(\bar{x})=\inf_{\substack{v\in T_A(\bar{x})\\\|v\|=1}}
	\theta'(\bar{x};v)
	=\inf_{\substack{v_1,v_2\ge0\\ v_1^2+v_2^2=1}}
	\min\{v_1,v_2\}=0.
	\]
	For any $x=(x_1,x_2)\in A$, the distance to the solution set is $d(x,S_0) = \min\{x_1,\,x_2\}$, which coincides exactly with $\theta(x)$.  
	Hence $d(x,S_0) = \theta(x)$ for all $x\in A$, yielding a local (in fact global) error bound at $\bar{x}$ with constant $\tau=1$.  
	The optimal local error bound constant is $\tau^*(\bar{x}) = 1 < +\infty$.
	
	A straightforward computation shows that
	\begin{equation}\label{Ecyta1}
		T_{S_0}(\bar{x}) = \{(v_1,v_2) \in \mathbb{R}^2: v_1\ge0,\; v_2=0\} \cup \{(v_1,v_2)  \in \mathbb{R}^2: v_1=0,\; v_2\ge0\}.
	\end{equation}
	
	Finally, we verify that the condition ``there exists a direction $v_0\in T_A(\bar{x})\setminus T_{S_0}(\bar{x})$ with $\theta'(\bar{x};v_0)=0$'' is not satisfied.  
	Suppose, to the contrary, that such a direction $\bar{v}=(v_1,v_2)\in T_A(\bar{x})\setminus T_{S_0}(\bar{x})$ exists with $\theta'(\bar{x};\bar{v})=0$.  
	Since $\bar{v}\in T_A(\bar{x})$, we have $v_1\ge0$ and $v_2\ge0$.  From $\theta'(\bar{x};\bar{v}) = \min\{v_1,v_2\}=0$, it follows that $v_1=0$ or $v_2=0$.  By \eqref{Ecyta1}, this forces $\bar{v}\in T_{S_0}(\bar{x})$, contradicting the choice of $\bar{v}$.  
	Thus no such direction exists.
	
	Thus, the hypothesis of Theorem~\ref{TYXWs}(ii) is not satisfied, yet a local error bound still exists. This demonstrates that the additional condition in Theorem~\ref{TYXWs}(ii) cannot be removed.
\end{example}

\begin{corollary}  \label{CtgXEQ}     Let  $\bar{x} \in S_0$. Assume that     $\theta$ is   Lipschitz continuous on $X$ with constant $L$   and     $\theta$ has a  local error bound at $\bar x$. Then 	
	\begin{equation}\label{EcxB1}
		\{v \in T_A(\bar x) : \theta'(\bar x; v) = 0\} \subseteq T_{S_0}(\bar x).
	\end{equation}
\end{corollary}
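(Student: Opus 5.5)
The plan is to prove the inclusion \eqref{EcxB1} directly, mirroring the argument of Theorem~\ref{TYXWs}(ii). Note that Theorem~\ref{TYXWs} assumes finite dimensions, while the corollary does not. So rather than cite that theorem, I will rerun its proof, which uses only the Lipschitz continuity of $\theta$ and the definitions of the tangent cones, not compactness. We argue by contradiction: suppose some $v_0\in T_A(\bar x)$ has $\theta'(\bar x;v_0)=0$ but $v_0\notin T_{S_0}(\bar x)$.

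\textbf{Step 1 (uniform distance from $S_0$ along $v_0$).} I will show there exist $\alpha>0$ and $t_0>0$ with $d(\bar x+tv_0,S_0)\ge\alpha t$ for all $t\in(0,t_0)$. Suppose not. Then there are $t_k\downarrow 0$ and points $s_k\in S_0$ with $\|\bar x+t_kv_0-s_k\|<t_k/k$. Setting $w_k:=(s_k-\bar x)/t_k$ gives $\bar x+t_kw_k=s_k\in S_0$ and $\|w_k-v_0\|<1/k$. Hence $v_0\in T_{S_0}(\bar x)$, a contradiction. This step does not need finite dimensionality, since the approximating directions $w_k$ are produced explicitly. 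It is the one place where care is needed, because Theorem~\ref{TYXWs} asserted it without proof.

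\textbf{Step 2 (a sequence in $A$ with small $\theta$).} From $v_0\in T_A(\bar x)$, pick $t_k\downarrow0$ and $v_k\to v_0$ with $x_k:=\bar x+t_kv_k\in A$. Lipschitz continuity and $\theta(\bar x)=0$ give
\[
0\le \frac{\theta(x_k)}{t_k}\le \frac{\theta(\bar x+t_kv_0)}{t_k}+L\|v_k-v_0\|\to\theta'(\bar x;v_0)=0 .
\]

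\textbf{Step 3 (the distance stays of order $t_k$).} Since $d(\cdot,S_0)$ is $1$-Lipschitz, Step 1 gives, for $k$ large,
\[
d(x_k,S_0)\ge \alpha t_k-t_k\|v_k-v_0\|\ge \tfrac{\alpha}{2}t_k .
\]

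\textbf{Step 4 (contradiction).} For $k$ large, $x_k\in A\cap B(\bar x,\delta)$, so the local error bound gives $\tfrac{\alpha}{2}t_k\le\tau\,\theta(x_k)$, that is, $\theta(x_k)/t_k\ge \alpha/(2\tau)>0$. This contradicts Step 2. Hence every $v\in T_A(\bar x)$ with $\theta'(\bar x;v)=0$ lies in $T_{S_0}(\bar x)$, which is \eqref{EcxB1}.
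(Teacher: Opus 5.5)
Your proof is correct and is essentially the paper's argument: the paper proves the corollary by appealing to the proof of Theorem~\ref{TYXWs}(ii), noting that finite dimensionality is not used there, and your Steps 2--4 reproduce that proof line by line. Your Step 1 also supplies a valid justification for the estimate $d(\bar x+tv_0,S_0)\ge\alpha t$ on $(0,t_0)$, which the paper asserts without proof, and you rightly skip the paper's preliminary observation $\varphi(\bar x)=0$, since the argument never uses it.
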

\begin{proof}   Suppose, to the contrary, that  (\ref{EcxB1}) is false.  Then there exists \(v_0\in T_A(\bar x)\setminus T_{S_0}(\bar x)\) such that \(\theta'(\bar x;v_0)=0\).   It is clear that $v_0 \neq 0$ and  \(\varphi(\bar x)=0\).  By Theorem   \ref{TYXWs}   (ii)   (in the proof of Theorem   \ref{TYXWs}   (ii), it is not necessary to assume that $X$ is a finite-dimensional space), we get that the local error bound cannot hold, contradicting the hypothesis.  Hence  (\ref{EcxB1}) must be true.   This completes the proof. 
\end{proof}

\begin{theorem}  \label{TnbEQ}     Let \(X\) be a  finite-dimensional  space, \(A\subseteq X\) be a  nonempty closed set and 
	\(\bar{x}\in S_0\).  Assume that \(\theta\) is    Lipschitz continuous on \(X\) with constant $L$ and \(\theta\) has a  local error bound  at \(\bar{x}\), i.e., there exist   $\tau>0$  and  $ \delta>0$ such that
	\begin{equation}\label{EfdB1}
		d (x,S_0)\le \tau\,\theta(x),   \quad\forall x\in A\cap B(\bar{x},\delta).
	\end{equation}
	Then 
	\[
	\theta'(\bar{x};v)\;\ge\;\frac{1}{\tau}\,d \bigl(v,\;T_{S_0}(\bar{x})\bigr),   \quad  \forall v\in T_A(\bar{x}).
	\]
\end{theorem}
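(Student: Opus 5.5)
We fix $v\in T_A(\bar x)$ and use the definition of the tangent cone to pick sequences $t_k\downarrow 0$ and $v_k\to v$ with $x_k:=\bar x+t_kv_k\in A$. The case $v=0$ is trivial, since $\theta'(\bar x;0)=0=d(0,T_{S_0}(\bar x))$ because $0\in T_{S_0}(\bar x)$. For $k$ large we have $\|x_k-\bar x\|=t_k\|v_k\|<\delta$, so \eqref{EfdB1} applies and gives $d(x_k,S_0)\le\tau\theta(x_k)$.

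\textbf{Upper bound on $\theta(x_k)/t_k$.} As in Remark \ref{RqXEQ} and Theorem \ref{TYXWs}(ii), Lipschitz continuity gives
\[
\frac{\theta(x_k)}{t_k}\le\frac{\theta(\bar x+t_kv)}{t_k}+L\|v_k-v\|,
\]
where we use $\theta(\bar x)=0$. Hence $\limsup_k \theta(x_k)/t_k\le\theta'(\bar x;v)$, and therefore
\[
\limsup_{k\to\infty}\frac{d(x_k,S_0)}{t_k}\le\tau\,\theta'(\bar x;v).
\]
It remains to prove $\liminf_k d(x_k,S_0)/t_k\ge d(v,T_{S_0}(\bar x))$.

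\textbf{Lower bound (main step).} We argue by finite dimensionality and compactness. Since $A$ is closed and $\theta$ is continuous, $S_0$ is closed, so for each $k$ we may choose a nearest point $y_k\in S_0$ with $\|x_k-y_k\|=d(x_k,S_0)$. Put $w_k:=(y_k-\bar x)/t_k$. Then
\[
\frac{d(x_k,S_0)}{t_k}=\|v_k-w_k\|.
\]
Pass to a subsequence realizing the liminf. If that liminf is $+\infty$ there is nothing to prove. Otherwise $\|v_k-w_k\|$ is bounded along the subsequence, so $(w_k)$ is bounded, and by finite dimensionality a further subsequence gives $w_k\to w$. Since $\bar x+t_kw_k=y_k\in S_0$ and $t_k\downarrow0$, the definition of the tangent cone yields $w\in T_{S_0}(\bar x)$. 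Consequently
\[
\liminf_{k\to\infty}\frac{d(x_k,S_0)}{t_k}=\|v-w\|\ge d\bigl(v,T_{S_0}(\bar x)\bigr).
\]
Actually $d(x_k,S_0)\le \|x_k-\bar x\|$ already gives boundedness, so the $+\infty$ case never occurs. Combining with the upper bound gives $d(v,T_{S_0}(\bar x))\le\tau\theta'(\bar x;v)$, which is the claim.

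The delicate point is this lower bound. One must pick near-optimal points in $S_0$ (exact minimizers if $S_0$ is closed, otherwise $\varepsilon t_k$-approximate ones), rescale them by $t_k$, and extract a convergent subsequence so that the limit lies in $T_{S_0}(\bar x)$. Everything else is the same Lipschitz estimate already used in Theorem \ref{TYXWs}.
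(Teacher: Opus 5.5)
Your proof is correct and follows essentially the paper's route. You approximate $v$ by $x_k=\bar x+t_kv_k\in A$, take (near-)nearest points $y_k\in S_0$, rescale to $w_k=(y_k-\bar x)/t_k$, extract a convergent subsequence whose limit lies in $T_{S_0}(\bar x)$, and combine this with the Lipschitz estimate $\limsup_k\theta(x_k)/t_k\le\theta'(\bar x;v)$. Your use of exact nearest points (valid since $S_0=A\cap\theta^{-1}(0)$ is closed) and of $d(x_k,S_0)\le t_k\|v_k\|$ to bound $(w_k)$ are minor simplifications of the paper's $\varepsilon t_k$-approximation and its error-bound-based boundedness argument.
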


\begin{proof}
	Take any \(v\in T_A(\bar{x})\). By the definition of the tangent cone, there exist a positive sequence \(\{t_k\}\downarrow 0\) and a sequence \(\{v_k\}\subset X\) such that
	\[
	v_k\to v,\qquad x_k:=\bar{x}+t_k v_k\in A,\ \  \forall k \in \mathbb{N}.
	\]
	Due to  \(x_k\to \bar{x}\), we can choose \(k_0 \in  \mathbb{N}\) large enough so that   \(x_k\in B(\bar{x},\delta)\) for all \(k\ge k_0\).   It follows from  (\ref{EfdB1})  that
	\begin{equation}\label{EfdB2}
		d (x_k,S_0)\le \tau\,\theta(x_k), \quad  \forall k\ge k_0 .  
	\end{equation}
	Fix an arbitrary \(\varepsilon>0\). For each \(k\ge k_0\), by the definition of distance, we can pick \(y_k\in S_0\) satisfying
	\begin{equation}\label{EfdB3}
		\|x_k-y_k\| \le d (x_k,S_0) + \varepsilon t_k.  
	\end{equation}
	Set
	\[
	w_k:=\frac{y_k-\bar{x}}{t_k}\;\in X .
	\]
	From (\ref{EfdB2}) and (\ref{EfdB3}) we immediately obtain
	\begin{equation}\label{EfdB4}
		\|x_k-y_k\| \le \tau\,\theta(x_k) + \varepsilon t_k.  
	\end{equation}
	We now show that \(\{w_k\}\) is bounded. Indeed, using (\ref{EfdB4})  and \(x_k=\bar{x}+t_k v_k\),  we get
	\begin{equation}\label{EfdB5}
		\|w_k\| = \frac{\|y_k-\bar{x}\|}{t_k}
		\le \frac{\|y_k-x_k\|}{t_k} + \frac{\|x_k-\bar{x}\|}{t_k}
		\le \frac{\tau\,\theta(x_k)+\varepsilon t_k}{t_k} + \|v_k\|.
	\end{equation}
	By the Lipschitz continuity of $\theta$,  we have
	\[
	\frac{\theta(x_k)}{t_k} \le \frac{\theta(\bar{x}+t_k v_0)}{t_k} + L \|v_k - v_0\|,
	\]
	and so 
	\begin{equation}\label{EfdB6}	
		\limsup_{k\to\infty} \frac{\theta(x_k)}{t_k} \le \theta'(\bar{x};v_0).	
	\end{equation}
	Hence the sequence \(\{\theta(x_k)/t_k\}\) is bounded. Moreover, \(v_k\to v\) implies that \(\{\|v_k\|\}\) is bounded. This together with   (\ref{EfdB5})  implies that   \(\{w_k\}\) is bounded.
	Because \(X\) is finite-dimensional,   there exist a subsequence \(\{k_j\}\) and   \(w\in X\) such that $ w_{k_j} \to w $.
	
	By construction, \(y_{k_j} = \bar{x} + t_{k_j} w_{k_j} \in S_0\) and \(t_{k_j}\downarrow 0\). According to the definition of the tangent cone, this implies	$		w \in T_{S_0}(\bar{x}).  $
	Now rewrite (\ref{EfdB4}) and divide by \(t_k\),
	\[
	\left\|v_k - w_k\right\| = \frac{\|x_k-y_k\|}{t_k}
	\le \tau\,\frac{\theta(x_k)}{t_k} + \varepsilon,  \quad  \forall    k\ge k_0.
	\]
	Taking the limit along the subsequence \(j\to\infty\) and using \(v_{k_j}\to v\), \(w_{k_j}\to w\), and  (\ref{EfdB6}), we get
	\[
	\|v - w\| \le \tau\,\theta'(\bar{x};v) + \varepsilon .
	\]
	Since \(w\in T_{S_0}(\bar{x})\), it follows that
	\[
	d \bigl(v,\,T_{S_0}(\bar{x})\bigr) \le \|v-w\| \le \tau\,\theta'(\bar{x};v) + \varepsilon .
	\]
	By the arbitrariness of   \(\varepsilon>0\),  we conclude that
	$	d  \bigl(v,\,T_{S_0}(\bar{x})\bigr) \le \tau\,\theta'(\bar{x};v),	$
	i.e.,
	\[
	\theta'(\bar{x};v) \ge \frac{1}{\tau}\,d \bigl(v,\,T_{S_0}(\bar{x})\bigr).
	\]
	This completes the proof.  \end{proof}

 \bigskip
 \noindent
 {\bf Disclosure statement}  \\
No potential conflict of interest was reported by the author(s).
 
\bigskip
\noindent
{\bf Acknowledgements}  \\
This work was supported by the National Natural Science Foundation of China [11801257] and the Natural Science Foundation of Jiangxi Province [20232BAB211012].

\end{document}